\newtheorem{theorem}{Theorem}[section]
\newtheorem{lemma}[theorem]{Lemma}
\newtheorem{prop}[theorem]{Proposition}
\newtheorem{corollary}[theorem]{Corollary}
\newtheorem{exAux}[theorem]{Example}
\newenvironment{example}{\begin{exAux} \rm}{\end{exAux}}
\newtheorem{Def}[theorem]{Definition}
\newenvironment{defi}{\begin{Def} \rm}{\end{Def}}
\newtheorem{Note}[theorem]{Note}
\newtheorem{Problem}[theorem]{Problem}
\newtheorem{Rem}[theorem]{Remark}
\newtheorem{Not}[theorem]{Notation}
\newtheorem{Conj}[theorem]{Conjecture}
\newtheorem{Ass}[theorem]{Assumption}
\newenvironment{proof}{\medskip\noindent{\bf Proof.\ }}{\qed\medskip}
\newenvironment{proofof}[1]{\medskip\noindent{\bf Proof  of {#1}.\ 
}}{\qed\medskip}
\newcommand{\qed}{\hfill\mbox{$\Box$\qquad\qquad}}
\newcommand{\F}{{\mathbb F}}
\newcommand{\C}{\mathbb{C}}
\newcommand{\R}{\mathbb{R}}
\newcommand{\Z}{\mathbb{Z}}
\newcommand{\A}{{A}}
\newcommand{\B}{ {B}}
\newcommand{\J}{{J}}
\newcommand{\I}{{I}}
\newcommand{\E}{{E}}
\newcommand{\q}{{\sf q}}
\renewcommand{\P}{{\sf P}}
\newcommand{\Q}{{\sf Q}}
\newcommand{\Mat}{\text{\rm Mat}}
\renewcommand{\th}{\theta}
\newcommand{\bilin}[1]{\langle #1 \rangle }
\newcommand{\be}[1]{(-1)^{\bilin{#1}}}
\newif\ifDRAFT
\begin{document}

\title{Formal self-duality and numerical self-duality for 
\\
symmetric association schemes}

\author{Kazumasa Nomura and Paul Terwilliger}

\maketitle

\begin{center}
\bf \small Abstract
\end{center}
\begin{quote}  \small
Let ${\mathcal X} = (X, \{R_i\}_{i=0}^d)$ denote a symmetric association scheme.
Fix an ordering $\{E_i\}_{i=0}^d$ of the primitive idempotents of $\mathcal{X}$,
and let $P$ (resp.\ $Q$) denote the corresponding first eigenmatrix (resp.\ second eigenmatrix) of $\mathcal X$.
The scheme $\mathcal X$ is said to be formally self-dual  (with respect to the ordering  $\{E_i\}_{i=0}^d$)
whenever $P=Q$.
We define $\mathcal X$ to be  numerically self-dual (with respect to the ordering  $\{E_i\}_{i=0}^d$)
whenever the intersection numbers and Krein parameters satisfy  $p^h_{i,j} =q^h_{i,j}$ for  $0 \leq h,i,j \leq d$.
It is known that  with respect to the ordering  $\{E_i\}_{i=0}^d$,
formal self-duality implies  numerical self-duality.
This raises the following question:
is it possible that  with respect to the ordering  $\{E_i\}_{i=0}^d$,
$\mathcal X$ is numerically self-dual but not formally self-dual?
This is possible as we will show.
We display an example of a symmetric association scheme and an ordering the primitive idempotents
with respect to which the scheme is numerically self-dual but not formally self-dual.
We have the following additional results about self-duality.
Assume that $\mathcal X$ is $P$-polynomial.
We show that the following are equivalent:
(i)
$\mathcal X$ is formally self-dual with respect to the ordering $\{E_i\}_{i=0}^d$;
(ii)
 $\mathcal X$ is numerically self-dual with respect to the ordering $\{E_i\}_{i=0}^d$.
Assume that the ordering $\{E_i\}_{i=0}^d$ is $Q$-polynomial.
We show that the following are equivalent:
(i)
$\mathcal X$ is formally self-dual with respect to the ordering $\{E_i\}_{i=0}^d$;
(ii)
 $\mathcal X$ is numerically self-dual with respect to the ordering $\{E_i\}_{i=0}^d$.
\end{quote}

\section{Introduction}

In this paper we consider two kinds of self-duality for symmetric association schemes:
formal self-duality and numerical self-duality.
The notion of formal self-duality is well-known  \cite[p.\ 49]{BCN}, \cite[Section 6]{MT}.
We introduce the notion of numerical self-duality
in this paper.
We now recall the notion of formal self-duality.
Consider a symmetric association scheme ${\mathcal X}  = (X, \{R_i\}_{i=0}^d)$.
The associate matrices $\{A_i\}_{i=0}^d$ of $\mathcal X$ form a basis of a subalgebra
$\mathcal M$ of $\Mat_X(\C)$, called the Bose-Mesner algebra of $\mathcal X$.
The algebra $\mathcal M$ has another basis $\{E_i\}_{i=0}^d$, consisting of the primitive idempotents. 
There are matrices $P$ and $Q$ in $\Mat_{d+1}(\C)$ called the first and second eigenmatrices
of $\mathcal X$. 
The matrix $P$ is the transition matrix from the basis $\{E_i\}_{i=0}^d$  to the basis $\{A_i\}_{i=0}^d$,
and the matrix $|X|^{-1} Q$ is the transition matrix from the basis $\{A_i\}_{i=0}^d$ to the basis
$\{E_i\}_{i=0}^d$.
The scheme $\mathcal X$ is said to be formally self-dual (with respect to the given ordering $\{E_i\}_{i=0}^d$
of the primitive idempotents) whenever $P=Q$.
There are many symmetric association schemes that are formally self-dual.
One familiar example is the Hamming scheme \cite[Section III.3.2]{BI}.
In the next two paragraphs we give some additional examples.

The notion of a spin model was introduced by V. F. R.  Jones \cite{Jones} to construct invariants
for knots and links.
A spin model $W$  is a symmetric matrix that satisfies two conditions, called type II and
type III.
In \cite{N:spin} the first author constructed an algebra $N(W)$ that contains $W$ and
is the Bose-Mesner algebra of a symmetric association scheme $\mathcal X$.
In \cite{JMN}, it is shown that $\mathcal X$ is formally self-dual.
The formal self-duality of $\mathcal X$ was used to investigate the
structure of $N(W)$ in  \cite{JMN, NT:spin, CN}.

A symmetric association acheme ${\mathcal X} = (X, \{R_i\}_{i=0}^d)$ is said to be $P$-polynomial whenever
the intersection numbers satisfy the following conditions for $0 \leq h,i,j \leq d$:
\begin{itemize}
\item[(i)]
$p^h_{i,j}=0$ if one of $h,i,j$ is greater than the sum of the other two;
\item[(ii)]
$p^h_{i,j} \neq 0$ if one of $h,i,j$ is equal to the sum of the other two.
\end{itemize}
There is a type of $P$-polynomial scheme, said to have classical parameters $(d,b,\alpha, \beta)$ (see \cite[Section 6.1]{BCN}).
By \cite[Corollary 8.4.4]{BCN}, if $\alpha=b-1$ then the scheme is formally self-dual.
The following schemes satisfy the condition $\alpha=b-1$ and $b \neq 1$:
bilinear forms scheme \cite[p.\ 280]{BCN};
alternating forms scheme \cite[p.\ 282]{BCN};
quadratic forms scheme \cite[p.\ 290]{BCN};
Hermitian forms scheme \cite[p.\ 285]{BCN};
affine $E_6(q)$ scheme \cite[p.\ 340]{BCN};
extended ternary Golay code scheme \cite[p.\ 356]{BCN}.
In \cite{IT:tetra} it is shown  that if $\alpha = b-1$ and $b \neq 1$
then the quantum algebra $U_q(\widehat{\mathfrak{sl}_2})$ acts on the standard module of the scheme,
where $b = q^2$.

We now introduce the concept of numerical self-duality.
For the rest of this section,
let ${\mathcal X} = (X, \{R_i\}_{i=0}^d)$ denote a symmetric association scheme.
The Bose-Mesner algebra of $\mathcal X$ is  closed under  the 
entry-wise product $\circ$.  
The Krein parameters $q^h_{i,j}$ $(0 \leq h,i,j \leq d)$ are defined by
$E_i \circ E_j = |X|^{-1} \sum_{h=0}^d q^h_{i,j}E_h$ for $0 \leq i,j \leq d$.
The scheme $\mathcal X$ is said to be numerically self-dual
(with respect to the given ordering $\{E_i\}_{i=0}^d$ of the primitive idempotents)
whenever 
\[
p^h_{i,j} = q^h_{i,j} \qquad \qquad (0 \leq h,i,j \leq d).
\]
It is known \cite[Section 2.3]{BCN} that if $\mathcal X$ is formally self-dual with respect to 
the ordering $\{E_i\}_{i=0}^d$
then $\mathcal X$ is numerically self-dual with respect to the ordering $\{E_i\}_{i=0}^d$.
For a given ordering $\{E_i\}_{i=0}^d$ of the primitive idempotents,
is it possible that $\mathcal X$ is numerically self-dual but not formally self-dual?
This is possible as we will show.
We will demonstrate with an example of a symmetric  association scheme 
and an ordering of the primitive idempotents
with respect to which the scheme is numerically self-dual but not formally self-dual.

We obtain the following additional results about self-duality.
Assume that ${\mathcal X}$ is $P$-polynomial.
Fix an ordering $\{E_i\}_{i=0}^d$ of the primitive idempotents of $\mathcal X$.
We show that the following are equivalent:
\begin{itemize}
\item[(i)]
 $\mathcal X$ is formally self-dual with respect to the ordering $\{E_i\}_{i=0}^d$;
\item[(ii)]
 $\mathcal X$ is numerically self-dual  with respect to the ordering $\{E_i\}_{i=0}^d$.
\end{itemize}
An ordering $\{E_i\}_{i=0}^d$ of the primitive idempotents of $\mathcal X$ is said to be
$Q$-polynomial whenever the Krein parameters satisfy the following conditions for $0 \leq h,i,j \leq d$:
\begin{itemize}
\item[(i)]
$q^h_{i,j}=0$ if one of $h,i,j$ is greater than the sum of the other two;
\item[(ii)]
$q^h_{i,j} \neq 0$ if one of $h,i,j$ is equal to the sum of the other two.
\end{itemize}
Assume that there is a $Q$-polynomial ordering
$\{E_i\}_{i=0}^d$ of the primitive idempotents of $\mathcal X$.
We show that the following are equivalent:
\begin{itemize}
\item[(i)]
$\mathcal X$ is formally self-dual with respect to the ordering $\{E_i\}_{i=0}^d$;
\item[(ii)]
$\mathcal X$ is numerically self-dual  with respect to the ordering $\{E_i\}_{i=0}^d$.
\end{itemize}

The paper is organized as follows.
In Section \ref{sec:pre} we fix our notation.
In Section \ref{sec:scheme} we recall the  definition of a symmetric association scheme,
and give some formulas.
In Section \ref{sec:dual} we recall the notion of formal self-duality,
and introduce the notion of numerical self-duality.
In Section  \ref{sec:ex} we display an example of a symmetric association scheme 
$\mathcal X$ and an ordering
of the primitive idempotents with respect to which $\mathcal X$ is numerically
self-dual but not formally self-dual.
In Section \ref{sec:Ppoly} we describe our results concerning $P$-polynomial schemes
and $Q$-polynomial schemes.
In Section \ref{sec:duality} we recall the concept of Askey-Wilson duality.
In Section \ref{sec:proof} we use Askey-Wilson duality to  prove our results stated in Section \ref{sec:Ppoly}.

\section{Preliminaries}
\label{sec:pre}

We now begin our formal argument.
Throughout the paper, the following notation is in effect.
Let  $\F$ denote a field. 
For an integer $n \geq 1$, let $\Mat_n(\F)$ denote  the $\F$-algebra consisting of the $n \times n$ matrices
with all entries in $\F$.
Let $V$ denote a vector space over $\F$ with dimension $n$.
Let  $\text{\rm End}(V)$ denote the $\F$-algebra consisting of the $\F$-linear
maps $V \to V$. 
We recall how each basis $\{v_i\}_{i=1}^n$ of $V$ gives an algebra isomorphism $\text{\rm End}(V) \to \Mat_n (\F)$.
For  $A \in {\rm End}(V)$ and $M \in \Mat_n(\F)$, we say that
{\em $M$ represents  $A$ with respect to $\{v_i\}_{i=1}^n$} whenever
 $A v_j = \sum_{i=1}^n M_{i,j} v_i$ for $1 \leq j \leq n$.
The isomorphism sends $A$ to the unique matrix in $\Mat_n(\F)$ that represents $A$ with
respect to $\{v_i\}_{i=1}^n$.
For a finite nonempty set $X$,
let $\Mat_X(\F)$ denote the $\F$-algebra consisting of the matrices
whose rows and columns are indexed by $X$ and with all entries in $\F$.
Let  $J \in \Mat_X(\F)$ have all entries $1$.
For $\A$, $\B \in \Mat_X(\F)$,
the {\em entry-wise product} $\A \circ \B\in \Mat_X(\F)$ is defined by
$(\A \circ \B)_{x,y} = \A_{x,y} \B_{x,y}$ for $x,y \in X$.
For $A \in \Mat_X(\F)$ and $B \in \Mat_Y(\F)$, the {\em Kronecker product}
$A \otimes B$ is the matrix in $\Mat_{X \times Y}(\F)$ such that 
\begin{align*}
(A \otimes B)_{ (x_1,y_1), (x_2, y_2)} 
 &= A_{x_1, x_2} B_{y_1,y_2} 
&&  (x_1, x_2 \in X,\; y_1, y_2 \in Y).
\end{align*}
For $A, A' \in\Mat_X(\F)$ and $B, B' \in \Mat_Y(\F)$,
\[
 (A \otimes B ) (A' \otimes B') = (A A') \otimes (B B').
\]

\section{Association schemes}
\label{sec:scheme}

For the rest of this paper, fix an integer $d \geq 1$.
By a {\em $d$-class symmetric association scheme} \cite[Section 2.1]{BCN} we mean a sequence
\[
 {\mathcal X} = (X, \{R_i\}_{i =0}^d),
\] 
where $X$ is a finite nonempty set and $R_0, R_1, \ldots, R_d$ are nonempty subsets of
the Cartesian product $X \times X$, that satisfy the following (i)--(iv).
\begin{itemize}
\item[\rm (i)]
$X \times X =  R_0 \cup R_1 \cup \cdots \cup R_d$ (disjoint union).
\item[\rm (ii)]
$R_0 = \{(x,x) \,|\, x \in X\}$.
\item[\rm (iii)]
$R_i^t = R_i$ for $0 \leq i \leq d$, where $R_i^t = \{(y,x) \,|\, (x,y) \in R_i\}$.
\item[\rm (iv)]
There exist integers $p^h_{i,j}$ $(0 \leq h,i,j \leq d)$
such that for $(x,y) \in R_h$,
\[
p^h_{i,j} = | \{ z \in X \,|\, (x,z) \in R_i, \; (z,y) \in R_j \} |.
\]
\end{itemize}
Note by (iii) that
\begin{itemize}
\item[\rm (v)]
$p^h_{i,j} = p^h_{j,i}  \qquad (0 \leq h,i,j \leq d)$.
\end{itemize}
The integers $p^h_{i,j}$  are called the {\em intersection numbers} of $\mathcal X$.

For the rest of this section, let $ {\mathcal X} = (X, \{R_i\}_{i =0}^d)$
denote a symmetric association scheme.
Define $k_i = p^0_{i,i}$ for $0 \leq i \leq d$.
By \cite[Lemma 2.1.1]{BCN},
\begin{align*}
p^h_{0,j} &= p^h_{j,0} = \delta_{j,h} \qquad(0 \leq h,j \leq d),
&
p^0_{i,j} &= \delta_{i,j} k_j \qquad (0\leq i,j \leq d).
\end{align*}
Let $\C$ denote the field of complex numbers.
For $0 \leq i \leq d$ define $\A_i \in \Mat_X (\C)$ such that
\begin{align*}
(\A_i)_{x,y} &=
 \begin{cases}
   1 & \text{\rm if $(x,y) \in R_i$},  
  \\
   0 & \text{\rm  if $(x,y) \not\in R_i$}
 \end{cases}
  &&  (x,y \in X).
\end{align*}
The matrix $A_i$ is called the {\em associate matrix} corresponding to $R_i$.
We have 
(i) $\sum_{i=0}^d \A_i = \J$;  
(ii)  $\A_0 = \I$; 
(iii) $\A_i = \A_i^t \;   (0 \leq i \leq d)$;               
(iv) $A_i \A_j = \sum_{h=0}^d p^h_{i,j} \A_h \; (0 \leq i,j \leq d)$;
(v) $\A_i \A_j = \A_j \A_i \; (0 \leq i,j \leq d)$. 
The matrices $\{\A_i\}_{i=0}^d$ form a basis of a commutative algebra $\mathcal M$,
called the {\em Bose-Mesner algebra} of $\mathcal X$ \cite[Section 2.2]{BCN}.
By \cite[p.\ 59, p.\ 64]{BI}, $\mathcal M$ has a second basis $\{\E_i\}_{i=0}^d$
such that
(i) $\sum_{i =0}^d \E_i = \I$; 
(ii) $\E_0 = |X|^{-1} \J$;
(iii) $\E_i^t = \E_i = \overline{\E}_i  \; (0 \leq i \leq d)$; 
(iv)$\E_i \E_j = \delta_{i,j} \E_i   \; (0 \leq i,j \leq d)$.  
We refer to $\{\E_i\}_{i=0}^d$ as the {\em  primitive idempotents} of $\mathcal X$.

We will be discussing the algebra $\Mat_{d+1}(\C)$.
For matrices in this algebra we index
the rows and columns by $0,1,\ldots, d$.
Define matrices $P$, $Q \in \Mat_{d+1} (\C)$ such that
\begin{align}
 \A_j &= \sum_{i=0}^d P_{i,j} \E_i     \qquad\qquad\qquad \;\; (0 \leq j \leq d),   \label{eq:Aj}
\\
 \E_j &= |X|^{-1} \sum_{i=0}^d Q_{i,j}  \A_i \qquad\qquad   (0 \leq j \leq d).    \label{eq:Ej}
\end{align}
Taking the complex conjugate in \eqref{eq:Aj} and \eqref{eq:Ej}, we observe
that the entries of $P$ and $Q$ are all real numbers.
By \cite[Section 2.2]{BCN}, $PQ = QP = |X| I$.
The matrix $P$ (resp.\ $Q$) is called the {\em first eigenmatrix} (resp.\ {\em second eigenmatrix}) 
{\em of $\mathcal X$ with respect to  the ordering $\{\E_i\}_{i=0}^d$}.
Since $\mathcal M$ is closed under the entry-wise product $\circ$,
there exist scalars $q^h_{i,j}\in\C$ $(0 \leq h,i,j \leq d)$
such that
\begin{align}
\E_i \circ \E_j &= |X|^{-1} \sum_{h=0}^d q^h_{i,j} \E_h   \qquad\qquad (0 \leq i,j \leq d).  \label{eq:EicircEj0}
\end{align}
The scalars $q^h_{i,j}$ are called the {\em Krein parameters} of $\mathcal X$
with respect to the ordering $\{\E_i\}_{i=0}^d$.
By \cite[Theorem 3.8]{BI} the Krein parameters are nonnegative real numbers.
Define $m_i = q^0_{i,i}$ for $0 \leq i  \leq d$.
By \cite[Lemma  2.3.1]{BCN},
\begin{align*}
 q^h_{0,j} &= q^h_{j,0} = \delta_{j,h}  \qquad  ( 0 \leq h,j \leq d),
&
q^0_{i,j} &= \delta_{i,j} m_j \qquad (0 \leq i,j \leq d).
\end{align*}
By \cite[Lemma 2.2.1]{BCN},
\begin{equation}
P_{0,j} = k_j, \qquad\qquad 
Q_{0,j} = m_j \qquad\qquad (0 \leq j \leq d).  \label{eq:Pi1}
\end{equation}
By \cite[Theorem 3.6]{BI},
\begin{align}
 p^h_{i,j} &= \frac{ k_i k_j }{ |X| } \sum_{r=0}^d \frac{1} { m_r^2} Q_{i,r} Q_{j,r} Q_{h,r}   
                  \qquad\qquad (0 \leq h,i,j \leq d),
                   \label{eq:phij0}
\\
 q^h_{i,j} &= \frac{ m_i m_j } {|X|}
                 \sum_{r=0}^d \frac{1}{k_r^2}  P_{i,r} P_{j,r} P_{h,r}     
                 \qquad\qquad  (0 \leq h,i,j \leq d).
                    \label{eq:qhij0}
\end{align}

\section{Formally self-dual and numerically self-dual association schemes}
\label{sec:dual}

Let $\mathcal X$ denote a symmetric $d$-class association scheme.
Fix an ordering $\{\E_i\}_{i=0}^d$ of the primitive idempotents of $\mathcal X$.
Let $q^h_{i,j}$ denote the Krein parameters of $\mathcal X$
with respect to the ordering $\{\E_i\}_{i=0}^d$.
Let $P$ (resp.\ $Q$) denote the first eigenmatrix (resp.\ second eigenmatrix) of $\mathcal X$
with respect to the ordering $\{\E_i\}_{i=0}^d$.
The association scheme  $\mathcal X$ is said to be {\em formally self-dual} (with respect to the ordering
$\{\E_i\}_{i=0}^d$) whenever $P = Q$ \cite[Section 2.3]{BCN}.
In this case we find by \eqref{eq:Pi1}--\eqref{eq:qhij0} that
\begin{align}
p^h_{i,j} &= q^h_{i,j}   \qquad\qquad  (0 \leq h,i,j \leq d).   \label{eq:phij=qhij}
\end{align}
This raises a natural question.
For a given ordering $\{\E_i\}_{i=0}^d$ of the primitive idempotents,
is it  possible that \eqref{eq:phij=qhij} holds but $P \neq Q$?
This is possible as we will show.
In the next section we will demonstrate with an example of an association scheme and an ordering
of the primitive idempotents with respect to which the intersection numbers and the Krein parameters
are the same but the first eigenmatrix and the second eigenmatrix are different.
We will use the following term.

\begin{defi}   \label{def:num}   \samepage
\ifDRAFT {\rm def:num}. \fi
The association scheme  $\mathcal X$ is said to be {\em numerically  self-dual} 
(with respect to the given ordering $\{\E_i\}_{i=0}^d$ of the primitive idempotents) 
whenever $p^h_{i,j} = q^h_{i,j}$  for $0 \leq h,i,j \leq d$.  
\end{defi}

Motivated  by our above  comments, we now consider what happens to the $P$, $Q$ and
the Krein parameters  if we change the ordering of the primitive idempotents.
Let $\sigma$ denote a permutation of $\{0,1,2, \ldots, d\}$ that fixes $0$.
Then $\{\E_{\sigma(i)} \}_{i=0}^d$ is an ordering of the primitive idempotents of $\mathcal X$.
Below we describe the first and second eigenmatrices and the Krein parameters with respect to this ordering.
These matrices and parameters will be expressed in bold font.

\begin{lemma}    \label{lem:Ps}   \samepage
\ifDRAFT {\rm lem:Ps}. \fi
Let $\sigma$ denote a permutation of $\{0,1,2, \ldots, d\}$ that fixes $0$.
Let $\P$ (resp.\ $\Q$) denote the first eigenmatrix (resp.\ second eigenmatrix) of $\mathcal X$
with respect to the ordering $\{\E_{\sigma(i)} \}_{i=0}^d$.
Then
\begin{align}
  \P_{i,j} &= P_{\sigma(i), j},   &
  \Q_{i,j} &= Q_{i, \sigma(j)}    &&   (0 \leq i,j \leq d).
       \label{eq:Pdij}
\end{align}
Let $T \in \Mat_{d+1}(\C)$ denote the permutation matrix coresponding to $\sigma$:
$T_{i,j} = \delta_{\sigma(i), \, j}$ $(0 \leq i,j \leq d)$. 
Then
\begin{align}
  \P& = T P, &
  \Q = Q T^{-1}.   \label{eq:TP}
\end{align}
\end{lemma}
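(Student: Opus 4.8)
The plan is to work directly from the defining relations \eqref{eq:Aj} and \eqref{eq:Ej}, together with the observation that $\{\E_{\sigma(i)}\}_{i=0}^d$ is the same basis of $\mathcal{M}$ as $\{\E_i\}_{i=0}^d$ (merely listed in a different order), and that $\{\A_i\}_{i=0}^d$ is a basis of $\mathcal{M}$. The entries of $\P$ and $\Q$ are then read off by comparing coefficients in these two bases, and the matrix identities \eqref{eq:TP} follow by a one-line computation.

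First I would establish the formula for $\P$. By the definition of $\P$ relative to the ordering $\{\E_{\sigma(i)}\}_{i=0}^d$, we have $\A_j = \sum_{i=0}^d \P_{i,j}\,\E_{\sigma(i)}$. On the other hand, reindexing the sum in \eqref{eq:Aj} by the substitution $i \mapsto \sigma(i)$ (legitimate since $\sigma$ is a bijection of $\{0,1,\ldots,d\}$) yields $\A_j = \sum_{i=0}^d P_{\sigma(i),j}\,\E_{\sigma(i)}$. Since the primitive idempotents form a basis of $\mathcal{M}$, comparing coefficients gives $\P_{i,j} = P_{\sigma(i),j}$.

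For $\Q$, the idempotent index sits in the column position rather than the row position. By the definition of $\Q$, $\E_{\sigma(j)} = |X|^{-1}\sum_{i=0}^d \Q_{i,j}\,\A_i$. Evaluating \eqref{eq:Ej} at $\sigma(j)$ in place of $j$ gives $\E_{\sigma(j)} = |X|^{-1}\sum_{i=0}^d Q_{i,\sigma(j)}\,\A_i$. Because $\{\A_i\}_{i=0}^d$ is a basis of $\mathcal{M}$, comparison of coefficients yields $\Q_{i,j} = Q_{i,\sigma(j)}$. This establishes \eqref{eq:Pdij}. The matrix identities \eqref{eq:TP} then follow at once: using $T_{i,j} = \delta_{\sigma(i),j}$ I compute $(TP)_{i,j} = \sum_{k=0}^d \delta_{\sigma(i),k} P_{k,j} = P_{\sigma(i),j} = \P_{i,j}$, so $\P = TP$; and since $T$ is a permutation matrix, $T^{-1} = T^t$ and $(T^{-1})_{i,j} = \delta_{\sigma(j),i}$, whence $(QT^{-1})_{i,j} = \sum_{k=0}^d Q_{i,k}\,\delta_{\sigma(j),k} = Q_{i,\sigma(j)} = \Q_{i,j}$, so $\Q = QT^{-1}$.

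The whole argument is essentially bookkeeping, so I do not expect a genuine obstacle. The one point requiring care is that the idempotent index occupies the row of $P$ but the column of $Q$, so $\sigma$ acts on the rows of $P$ (left multiplication by $T$) and on the columns of $Q$ (right multiplication). The relation $(T^{-1})_{i,j} = \delta_{\sigma(j),i}$ for the permutation matrix $T$ is exactly what produces $T^{-1}$ rather than $T$ in the formula for $\Q$, and keeping these two directions straight is the main thing to watch.
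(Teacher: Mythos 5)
Your proof is correct and follows essentially the same route as the paper: the paper also obtains \eqref{eq:Pdij} by replacing $i$ with $\sigma(i)$ in \eqref{eq:Aj} and $j$ with $\sigma(j)$ in \eqref{eq:Ej}, comparing coefficients against the defining relations for $\P$ and $\Q$. The only difference is that you spell out the routine verification of \eqref{eq:TP}, which the paper leaves implicit; your computations there are accurate.
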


\begin{proof}
In \eqref{eq:Aj},  replace $i$ with $\sigma(i)$ to get the equation  on the left in \eqref{eq:Pdij}.
In \eqref{eq:Ej}, replace $j$ with $\sigma(j)$ to get 
the equation on the right in \eqref{eq:Pdij}.
\end{proof}

\begin{lemma}   \label{lem:qd}   \samepage
\ifDRAFT {\rm lem:qd}. \fi
Let $\sigma$ denote a permutation of $\{0,1,2, \ldots, d\}$ that fixes $0$.
Let $\q^h_{i,j}$ denote the Krein parameters of $\mathcal X$
with respect to the ordering $\{\E_{\sigma(i)} \}_{i=0}^d$.
Then 
\begin{align}
  \q^h_{i,j} &= q^{\sigma(h)}_{\sigma(i),\; \sigma(j)}   \qquad\qquad  ( 0 \leq  h,i,j \leq d).
                        \label{eq:q}
\end{align}
\end{lemma}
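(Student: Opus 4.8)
The plan is to compute the entry-wise product $\E_{\sigma(i)} \circ \E_{\sigma(j)}$ in two different ways and then match coefficients. First I would write down the relation that \emph{defines} the new Krein parameters: since $\{\E_{\sigma(i)}\}_{i=0}^d$ is the reordered list of primitive idempotents, applying the defining equation \eqref{eq:EicircEj0} to this ordering gives
\[
\E_{\sigma(i)} \circ \E_{\sigma(j)} = |X|^{-1} \sum_{h=0}^d \q^h_{i,j} \, \E_{\sigma(h)} \qquad\qquad (0 \leq i,j \leq d).
\]
On the other hand, applying \eqref{eq:EicircEj0} directly to the pair $\sigma(i), \sigma(j)$ in the \emph{original} ordering gives
\[
\E_{\sigma(i)} \circ \E_{\sigma(j)} = |X|^{-1} \sum_{k=0}^d q^k_{\sigma(i),\,\sigma(j)} \, \E_k.
\]

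Next I would reindex the second sum by the substitution $k = \sigma(h)$. Since $\sigma$ is a permutation of $\{0,1,\ldots,d\}$, as $h$ ranges over $\{0,1,\ldots,d\}$ so does $\sigma(h)$, so the second expression becomes
\[
\E_{\sigma(i)} \circ \E_{\sigma(j)} = |X|^{-1} \sum_{h=0}^d q^{\sigma(h)}_{\sigma(i),\,\sigma(j)} \, \E_{\sigma(h)}.
\]
Comparing this with the defining relation for the $\q^h_{i,j}$ and equating the coefficients of $\E_{\sigma(h)}$ yields \eqref{eq:q}. This coefficient comparison is legitimate because $\{\E_i\}_{i=0}^d$ is a basis of $\mathcal M$, so its reordering $\{\E_{\sigma(h)}\}_{h=0}^d$ is again a basis, and each element of $\mathcal M$ has a unique expansion in a given basis.

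There is essentially no analytic obstacle here; the whole content is bookkeeping. The one point requiring care is the reindexing $k = \sigma(h)$, together with the observation that permuting a basis yields a basis, so that comparing coefficients is valid. This mirrors the argument of Lemma \ref{lem:Ps}, where the transformed eigenmatrices were obtained by substituting $\sigma(i)$ (resp.\ $\sigma(j)$) into \eqref{eq:Aj} (resp.\ \eqref{eq:Ej}); here the same substitution is carried out inside the defining relation \eqref{eq:EicircEj0}.
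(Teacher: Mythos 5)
Your proposal is correct and is exactly the paper's argument written out in full: the paper's one-line proof (``in \eqref{eq:EicircEj0}, replace $h,i,j$ with $\sigma(h),\sigma(i),\sigma(j)$'') is precisely your substitution followed by the reindexing $k=\sigma(h)$ and coefficient comparison in the basis $\{\E_{\sigma(h)}\}_{h=0}^d$. Your version makes explicit the two details the paper leaves implicit (that $\sigma$ being a permutation justifies the reindexing, and that a reordered basis is still a basis, so coefficients are unique), which is fine but not a different route.
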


\begin{proof}
In  \eqref{eq:EicircEj0},
replace $h$, $i$, $j$ with $\sigma(h)$, $\sigma(i)$, $\sigma(j)$, respectively.
This gives \eqref{eq:q}.
\end{proof}

\section{An example of a scheme}
\label{sec:ex}

There is a type of association scheme called a group scheme \cite{BM}.
In this section, we discuss the group scheme attached to a certain abelian group 
that we now define.
Consider the additive group $\Z_2 = \Z/2 \Z = \{0,1\}$.
Fix an integer $m \geq 1$. We define an abelian group $G = G^{(m)}$ by
\begin{align}
 G &= \Z_2 \oplus \cdots \oplus \Z_2  \qquad \text{\rm ($m$ copies)}.     \label{eq:defG}
\end{align}
Note that $|G| = 2^m$.
We view each element  of $G$ as a sequence $a_1 a_2 \cdots a_m$, where
$a_i \in \Z_2$ for $1 \leq i \leq m$.
We now describe the group scheme attached to $G$.
In this description, it is convenient to put the elements of $G$ in lexicographical order.
For example, if $m=2$ then the order is $00 < 01 < 10 < 11$.
For $x \in G$ define a relation $R_x \subseteq G \times G$ by
\begin{align*}
R_x = \{(y,y+x)\,|\, y \in G \}.
\end{align*}
Then $(G, \{R_x\}_{x \in G})$ forms a $d$-class symmetric association scheme 
with $d=|G|-1$,
called the group scheme for $G$; see \cite{BM}.
We denote this association scheme by ${\mathcal X}^{(m)}$.
Below we describe the structure of ${\mathcal X}^{(m)}$.

We first consider the case $m=1$.
The associate matrices of ${\mathcal X}^{(1)}$ are
\begin{align*}
  \A^{(1)}_0 &= \begin{pmatrix} 1 & 0 \\ 0 & 1 \end{pmatrix},  &
  \A^{(1)}_1&= \begin{pmatrix} 0 & 1 \\ 1 & 0  \end{pmatrix}.
\end{align*}
The primitive idempotents of ${\mathcal X}^{(1)}$ are
\begin{align*}
\E^{(1)}_0 &= \frac{1}{2} \begin{pmatrix} 1 & 1 \\ 1 & 1 \end{pmatrix},  &
 \E^{(1)}_1 &= \frac{1}{2} \begin{pmatrix} 1 & -1 \\ -1 & 1 \end{pmatrix}.
\end{align*}
The first and second eigenmatrices of ${\mathcal X}^{(1)}$ are
\begin{align*}
 P^{(1)} &= \begin{pmatrix} 1 & 1 \\ 1 & -1 \end{pmatrix},  &
 Q^{(1)} &= \begin{pmatrix} 1 & 1 \\ 1 & -1 \end{pmatrix}.
\end{align*}
Note that $P^{(1)}=Q^{(1)}$.
On occasion, it is convenient to view $\Z_2$ as the field with two elements.
Using this point of view,
\begin{align}
(P^{(1)})_{x,y} &= (-1)^{xy},   & (Q^{(1)})_{x,y} &= (-1)^{xy}  &&  (x,y \in \Z_2).   \label{eq:P1xy}
\end{align}

For the rest of this section, assume that $m \geq 2$.
The associate matrices and the primitive idempotents of ${\mathcal X}^{(m)}$
are described as follows.
Pick $x \in G$ and write $x=x_1 x_2 \cdots x_m$.
Then
\begin{align}
\A^{(m)}_x & = \A^{(1)}_{x_1} \otimes  \A^{(1)}_{x_2} \otimes \cdots \otimes   \A^{(1)}_{x_m},
                       \label{eq:defAx}
\\
\E^{(m)}_x &= \E^{(1)}_{x_1} \otimes  \E^{(1)}_{x_2} \otimes \cdots \otimes   \E^{(1)}_{x_m}.   \label{eq:defEx}
\end{align}
For $x,y \in G$ we have
\begin{align*}
  \A^{(m)}_x \A^{(m)}_y &= \A^{(m)}_{x+y},   &
  \E^{(m)}_x \circ \E^{(m)}_y &= |G|^{-1} \E^{(m)}_{x+y}.
\end{align*}
The intersection numbers and Krein  parameters of  ${\mathcal X}^{(m)}$
are described as follows.
For $x,y,z \in G$ we have 
\begin{align}
   p^z_{x,y} &= \delta_{x+y,\,z},    & q^z_{x,y} &= \delta_{x+y,\,z}.   
 \label{eq:pcab} 
\end{align}
The first and the second eigenmatrices of ${\mathcal X}^{(m)}$ are
described as follows.
\begin{align*}
P^{(m)} &= P^{(1)} \otimes P^{(1)}\otimes \cdots \otimes P^{(1)} \qquad\qquad (\text{$m$ copies}),
\\
Q^{(m)} &= Q^{(1)} \otimes Q^{(1)}\otimes \cdots \otimes Q^{(1)}  \qquad\qquad (\text{$m$ copies}).
\end{align*}
Note that $P^{(m)} = Q^{(m)}$.

Next we obtain a formula for the entries of $P^{(m)}$ and $Q^{(m)}$
that is analogous to \eqref{eq:P1xy}.
Pick $x,y \in G$ and write
\begin{align*}
x &= x_1 x_2 \cdots x_m,    & y =& y_1 y_2 \cdots y_m.
\end{align*}
Then
\begin{align}
  (P^{(m)})_{x,y}   &= \be{x,y},  
&
 (Q^{(m)})_{x,y} &= \be{x,y},            \label{eq:Qxy}
\end{align}
where
\[
   \bilin{x,y} = \sum_{i=1}^m x_i y_i.
\]
The group $G$ becomes a vector space over the field $\Z_2$ in the following way.
For a scalar 
$\alpha$ in $\Z_2$ and an element $x = x_1 x_2 \cdots x_m$ in $G$, 
$\alpha x = x'_1 x'_2 \cdots x'_m$
where $x'_i = \alpha x_i$ for $1 \leq i \leq m$.
Note that the map $\bilin{ \, , \,} : G \times G \to \Z_2$ is a symmetric bilinear form on the vector space $G$.

Our next goal is to determine the orderings of the primitive idempotents of ${\mathcal X}^{(m)}$
with respect to which
${\mathcal X}^{(m)}$ is numerically self-dual.
 
\begin{lemma}   \label{lem:linear}   \samepage
\ifDRAFT {\rm lem:linear}. \fi
For a map $\sigma : G \to G$
the following are equivalent:
\begin{itemize}
\item[\rm (i)]
$\sigma$ is $\Z_2$-linear;
\item[\rm (ii)]
$\sigma(x+y) = \sigma(x) + \sigma(y)$ for  $x,y \in G$.
\end{itemize}
Suppose {\rm (i)} and {\rm (ii)} hold.
Then $\sigma$ fixes $0$.
\end{lemma}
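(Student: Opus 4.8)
The plan is to unpack the definition of $\Z_2$-linearity, which comprises two requirements: additivity, $\sigma(x+y) = \sigma(x) + \sigma(y)$ for all $x,y \in G$, and homogeneity, $\sigma(\alpha x) = \alpha \sigma(x)$ for all $\alpha \in \Z_2$ and $x \in G$. Condition (ii) is precisely the additivity requirement, so the implication (i) $\Rightarrow$ (ii) is immediate. The content lies in the reverse implication (ii) $\Rightarrow$ (i), where I would show that over the two-element field $\Z_2$ additivity already forces homogeneity.

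First I would record that additivity forces $\sigma(0) = 0$: setting $x = y = 0$ in (ii) gives $\sigma(0) = \sigma(0) + \sigma(0)$, and cancelling $\sigma(0)$ from both sides (equivalently, using that every element of the $\Z_2$-space $G$ equals its own additive inverse) yields $\sigma(0) = 0$. This single computation also settles the final assertion that $\sigma$ fixes $0$, since $\sigma$ satisfying (i) and (ii) in particular satisfies additivity.

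To finish (ii) $\Rightarrow$ (i), I would verify homogeneity by checking it for each of the two scalars in $\Z_2 = \{0,1\}$. For $\alpha = 1$ the identity reads $\sigma(x) = \sigma(x)$ and holds trivially, since $1 \cdot x = x$ and $1 \cdot \sigma(x) = \sigma(x)$. For $\alpha = 0$ we compute $\sigma(0 \cdot x) = \sigma(0) = 0 = 0 \cdot \sigma(x)$, using the previous paragraph together with the definition $0 \cdot x = 0$ recorded just before the lemma. Hence $\sigma$ satisfies both axioms and is $\Z_2$-linear.

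There is no genuine obstacle here; the only point worth flagging is the structural reason the equivalence holds, namely that over $\Z_2$ the homogeneity axiom imposes nothing beyond additivity, because the unique nonzero scalar acts as the identity and the zero-scalar case reduces exactly to $\sigma(0) = 0$, which additivity already guarantees.
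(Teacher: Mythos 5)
Your proposal is correct and follows essentially the same route as the paper's proof: set $x=y=0$ in (ii) to obtain $\sigma(0)=0$, then verify homogeneity by checking the two scalars $\alpha=1$ (trivial) and $\alpha=0$ (reduces to $\sigma(0)=0$), with the final assertion about fixing $0$ following from the same computation. Your write-up is merely a bit more explicit about why additivity over $\Z_2$ subsumes homogeneity, which is a fine remark but not a different argument.
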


\begin{proof}
(i) $\Rightarrow$ (ii)
Clear.

(ii) $\Rightarrow$ (i)
By setting $x=y=0$ in (ii), we get $\sigma(0)=0$.
We show that $\sigma(\alpha x) = \alpha \sigma(x)$ for $\alpha \in \Z_2$ and $x \in G$.
If $\alpha = 1$ then the both sides are $\sigma (x)$. If $\alpha=0$ then both sides are $0$ since
$\sigma(0)=0$.
\end{proof}

\begin{theorem}    \label{thm:main1}    \samepage
\ifDRAFT {\rm thm:main1}. \fi
For a bijection $\sigma : G \to G$ 
the following are equivalent:
\begin{itemize}
\item[\rm (i)]
$\sigma(0)=0$ and ${\mathcal X}^{(m)}$ is numerically self-dual with respect to the ordering 
$\E^{(m)}_{\sigma(x)}$ $(x \in G)$;
\item[\rm (ii)]
$\sigma$ is $\Z_2$-linear.
\end{itemize}
\end{theorem}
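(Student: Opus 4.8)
The plan is to reduce the theorem to the explicit parameters of $\mathcal{X}^{(m)}$ recorded in \eqref{eq:pcab}, namely $p^z_{x,y}=\delta_{x+y,\,z}$ and $q^z_{x,y}=\delta_{x+y,\,z}$ for $x,y,z\in G$, combined with Lemma \ref{lem:qd} (the transformation of the Krein parameters under a reordering of the primitive idempotents) and Lemma \ref{lem:linear} (additivity is equivalent to $\Z_2$-linearity). The crucial preliminary remark is that reordering the primitive idempotents by $\sigma$ leaves the intersection numbers unchanged, whereas by Lemma \ref{lem:qd} the Krein parameters with respect to the ordering $\E^{(m)}_{\sigma(x)}$ $(x\in G)$ become
\[
  \q^z_{x,y} = q^{\sigma(z)}_{\sigma(x),\,\sigma(y)} = \delta_{\sigma(x)+\sigma(y),\,\sigma(z)}.
\]
Hence numerical self-duality with respect to this ordering is exactly the statement that $\delta_{x+y,\,z}=\delta_{\sigma(x)+\sigma(y),\,\sigma(z)}$ for all $x,y,z\in G$.

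For the implication (ii) $\Rightarrow$ (i), I would first apply the last assertion of Lemma \ref{lem:linear} to obtain $\sigma(0)=0$, which legitimizes the use of Lemma \ref{lem:qd}. Then, using $\sigma(x)+\sigma(y)=\sigma(x+y)$ from linearity together with the fact that $\sigma$ is a bijection, I would rewrite $\delta_{\sigma(x)+\sigma(y),\,\sigma(z)}=\delta_{\sigma(x+y),\,\sigma(z)}=\delta_{x+y,\,z}=p^z_{x,y}$, which is precisely numerical self-duality.

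For the implication (i) $\Rightarrow$ (ii), the hypothesis $\sigma(0)=0$ again permits the application of Lemma \ref{lem:qd}, so numerical self-duality reads $\delta_{x+y,\,z}=\delta_{\sigma(x)+\sigma(y),\,\sigma(z)}$. The decisive step is to specialize $z=x+y$: the left-hand side is then $1$, forcing the right-hand side to equal $1$ as well, i.e. $\sigma(x)+\sigma(y)=\sigma(x+y)$ for all $x,y\in G$. This is exactly condition (ii) of Lemma \ref{lem:linear}, which upgrades additivity to $\Z_2$-linearity and finishes the proof.

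I do not expect a genuine obstacle; the argument is essentially a one-line manipulation of Kronecker deltas in each direction. The only place demanding care is the bookkeeping in the preliminary remark, namely keeping straight that the intersection numbers are read off the (unchanged) associate matrices while the Krein parameters transform according to Lemma \ref{lem:qd}, so that the two sides of the numerical self-duality equation are correctly identified before the specialization $z=x+y$.
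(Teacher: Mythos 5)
Your proposal is correct and follows essentially the same route as the paper's proof: both compute $\q^z_{x,y}=q^{\sigma(z)}_{\sigma(x),\,\sigma(y)}=\delta_{\sigma(x)+\sigma(y),\,\sigma(z)}$ via Lemma \ref{lem:qd} and \eqref{eq:pcab}, compare with $p^z_{x,y}=\delta_{x+y,\,z}=\delta_{\sigma(x+y),\,\sigma(z)}$ (using injectivity of $\sigma$), and reduce the equivalence to additivity via Lemma \ref{lem:linear}. Your explicit specialization $z=x+y$ and the care about $\sigma(0)=0$ legitimizing Lemma \ref{lem:qd} are just slightly more detailed versions of what the paper does implicitly.
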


\begin{proof}
Using the equation on the left in \eqref{eq:pcab}, we obtain
\begin{align}
 p^z_{x,y} &= \delta_{x+y, z} =\delta_{\sigma(x+y), \, \sigma(z)}   \qquad\qquad    (x,y,z \in G).   \label{eq:mainaux1}
\end{align}
Let $\q^z_{x,y}$  denote the Krein parameters of ${\mathcal X}^{(m)}$
with respect to the ordering $\E^{(m)}_{\sigma(x)}$ $(x \in G)$.
Using Lemma \ref{lem:qd} and the equation on the right in \eqref{eq:pcab}, we obtain
 \begin{align}
 \q^z_{x,y}  &= q^{\sigma(z)}_{\sigma(x), \,\sigma(y)} = \delta_{\sigma(x)+\sigma(y),\, \sigma(z)}
            \qquad\qquad   (x,y,z \in G).          \label{eq:mainaux2}
\end{align}
Let $x,y \in G$.
Comparing \eqref{eq:mainaux1} and \eqref{eq:mainaux2}, we find that
$\sigma(x+y) = \sigma(x) + \sigma(y)$ if and only if
$p^z_{x,y} = \q^z_{x,y}$ for $z \in G$.
By this and Lemma \ref{lem:linear} we get the result.
\end{proof}

Our next goal is to determine the orderings of the primitive idempotents of ${\mathcal X}^{(m)}$
with respect to which
${\mathcal X}^{(m)}$ is formally self-dual.
Let $\sigma : G \to G$ denote a map.
Shortly we will give a result involving the condition
\begin{align}
  \bilin{\sigma(x),y } &= \bilin{x, \sigma(y)}  \qquad\qquad (x,y \in G).    \label{eq:condsig}
\end{align}
To prepare for this result, we have some comments about \eqref{eq:condsig}.

Referring to the group $G  = \Z_2 \oplus \cdots \oplus \Z_2$,
for $1 \leq i \leq m$ let $e_i$ denote the generator of the $i$th copy of $\Z_2$.
Thus
\[
e_i = 0\cdots 01 0 \cdots 0,
\]
where the $1$ is in coordinate $i$.
Note that $\{e_i\}_{i=1}^m$ is an orthonormal basis of the vector space $G$.
Also note that 
\begin{align}
  x&= \sum_{i=1}^m \bilin{x, e_i} e_i  \qquad\qquad  (x \in G).   
 \label{eq:xei}
\end{align}

\begin{lemma}   \label{lem:linear2}   \samepage
\ifDRAFT {\rm lem:linear2}. \fi
Let $\sigma : G \to G$ denote a map that satisfies the condition \eqref{eq:condsig}.
Then $\sigma$ is $\Z_2$-linear.
\end{lemma}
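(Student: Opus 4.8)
The plan is to reduce the claim to additivity and then exploit the reconstruction formula \eqref{eq:xei}. By Lemma \ref{lem:linear}, the map $\sigma$ is $\Z_2$-linear as soon as $\sigma(x+y) = \sigma(x) + \sigma(y)$ holds for all $x,y \in G$, so this single identity is all I need to establish. Note that at the outset $\sigma$ is merely a map, with no linearity assumed; the only structure available is the adjoint-type hypothesis \eqref{eq:condsig}, so the argument must extract additivity from \eqref{eq:condsig} together with the bilinearity of $\bilin{\,,\,}$.

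The key step is to test $\sigma(x+y)$ against the orthonormal basis $\{e_i\}_{i=1}^m$. Fixing $x,y \in G$ and $i$ with $1 \leq i \leq m$, I would compute the pairing
\[
\bilin{\sigma(x+y),\, e_i} = \bilin{x+y,\, \sigma(e_i)} = \bilin{x,\, \sigma(e_i)} + \bilin{y,\, \sigma(e_i)} = \bilin{\sigma(x),\, e_i} + \bilin{\sigma(y),\, e_i},
\]
where the first and last equalities use \eqref{eq:condsig} (pushing $\sigma$ across the form and then back), and the middle equality uses that $\bilin{\,,\,}$ is bilinear in its first argument. This shows $\bilin{\sigma(x+y),\, e_i} = \bilin{\sigma(x)+\sigma(y),\, e_i}$ for every $i$.

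Finally I would feed these coordinate-wise equalities into the reconstruction formula \eqref{eq:xei}, applied to $z = \sigma(x+y)$, to conclude
\[
\sigma(x+y) = \sum_{i=1}^m \bilin{\sigma(x+y),\, e_i}\, e_i = \sum_{i=1}^m \bilin{\sigma(x)+\sigma(y),\, e_i}\, e_i = \sigma(x)+\sigma(y),
\]
and then invoke Lemma \ref{lem:linear} to obtain $\Z_2$-linearity. The computation itself is routine; the only real content is the recognition that the form is nondegenerate, so that an element of $G$ is determined by its pairings against the orthonormal basis $\{e_i\}_{i=1}^m$. This nondegeneracy, packaged as \eqref{eq:xei}, is what converts the adjoint condition \eqref{eq:condsig} into genuine additivity, and it is the one place where the argument could fail if the basis were not orthonormal.
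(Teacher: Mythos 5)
Your proof is correct and follows essentially the same route as the paper's: both reduce the claim to additivity via Lemma \ref{lem:linear}, use \eqref{eq:condsig} to move $\sigma$ across the form, apply bilinearity, and recover the conclusion from the reconstruction formula \eqref{eq:xei}. The only cosmetic difference is that the paper expands $\sigma(x)$, $\sigma(y)$, $\sigma(x+y)$ as sums $\sum_{i=1}^m \bilin{\,\cdot\,, \sigma(e_i)} e_i$ and compares coefficients, whereas you compare the pairings $\bilin{\,\cdot\,, e_i}$ directly; these are the same computation.
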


\begin{proof}
We invoke Lemma \ref{lem:linear}.
For $x,y \in G$ we show that
\begin{align}
    \sigma(x+y) = \sigma(x) +\sigma(y).    \label{eq:linear2aux}
\end{align}
Using \eqref{eq:condsig} and \eqref{eq:xei} we obtain
\begin{align*}
 \sigma(x)  &= \sum_{i=1}^m \bilin{x, \sigma(e_i) } e_i.
\end{align*}
Similarly
\begin{align*}
 \sigma(y)  &= \sum_{i=1}^m \bilin{y, \sigma(e_i) } e_i,
\\
 \sigma(x+y)  &= \sum_{i=1}^m \bilin{x+y, \sigma(e_i) } e_i.
\end{align*}
For $1 \leq i \leq m$,
\[
    \bilin{x, \sigma(e_i)}  +  \bilin{y, \sigma(e_i)}
  = \bilin{x+y, \sigma(e_i)}.
\]
By these comments we obtain \eqref{eq:linear2aux}.
\end{proof}

Let $\sigma : G \to G$ denote a $\Z_2$-linear map.
Let the matrix $S \in \Mat_m(\Z_2)$ represent $\sigma$
with respect to the basis $\{e_i\}_{i=1}^m$.
Thus
\begin{align*}
  \sigma(e_j) &= \sum_{i=1}^m S_{i,j} e_i  \qquad\qquad  (1 \leq j \leq m).
\end{align*}
Using this and \eqref{eq:xei}, we find that for $1 \leq i,j \leq m$,
\begin{equation}
  S_{i,j} =  \bilin{ e_i, \sigma(e_j)} = \bilin{\sigma(e_j), e_i }.        \label{eq:esamplepre}
\end{equation}

\begin{lemma}   \label{lem:example}    \samepage
\ifDRAFT {\rm lem:example}. \fi
For a map $\sigma : G \to G$ the following are equivalent:
\begin{itemize}
\item[\rm (i)]
$\sigma$ satisfies \eqref{eq:condsig};
\item[\rm (ii)]
$\sigma$ is $\Z_2$-linear,
and the matrix representing $\sigma$ with respect to $\{e_i\}_{i=1}^m$ is symmetric. 
\end{itemize}
\end{lemma}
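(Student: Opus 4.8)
The plan is to prove the two implications separately, leaning on Lemma \ref{lem:linear2} for the forward direction and on a bilinearity reduction for the reverse. The underlying point is that condition \eqref{eq:condsig} says precisely that $\sigma$ is self-adjoint for the form $\bilin{\,,\,}$, and the content of the lemma is the familiar fact that a self-adjoint operator has a symmetric matrix in an orthonormal basis, transcribed to the present $\Z_2$ setting.

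For (i) $\Rightarrow$ (ii), I would first invoke Lemma \ref{lem:linear2}: since $\sigma$ satisfies \eqref{eq:condsig}, it is $\Z_2$-linear, so the matrix $S$ representing $\sigma$ with respect to $\{e_i\}_{i=1}^m$ is defined and equation \eqref{eq:esamplepre} is available. To show $S$ is symmetric, fix $i,j$ with $1 \le i,j \le m$ and specialize \eqref{eq:condsig} to $x = e_i$, $y = e_j$, obtaining $\bilin{\sigma(e_i), e_j} = \bilin{e_i, \sigma(e_j)}$. Rewriting each side via \eqref{eq:esamplepre} (noting that $\bilin{\sigma(e_i), e_j} = S_{j,i}$ while $\bilin{e_i, \sigma(e_j)} = S_{i,j}$) turns this into $S_{j,i} = S_{i,j}$, which is the desired symmetry.

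For (ii) $\Rightarrow$ (i), the key observation is that, once $\sigma$ is assumed $\Z_2$-linear, both expressions $\bilin{\sigma(x), y}$ and $\bilin{x, \sigma(y)}$ are bilinear in the pair $(x,y)$, since $\bilin{\,,\,}$ is bilinear. Hence it suffices to verify \eqref{eq:condsig} on pairs of basis vectors. For $x = e_i$, $y = e_j$, equation \eqref{eq:esamplepre} gives $\bilin{\sigma(e_i), e_j} = S_{j,i}$ and $\bilin{e_i, \sigma(e_j)} = S_{i,j}$, and symmetry of $S$ makes these equal. Expanding a general $x$ and $y$ through \eqref{eq:xei} and using bilinearity then propagates the equality to all $x,y \in G$, yielding \eqref{eq:condsig}.

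There is no deep obstacle here; the only thing requiring care is the index bookkeeping in \eqref{eq:esamplepre}, where $\bilin{\sigma(e_i), e_j}$ corresponds to $S_{j,i}$ rather than $S_{i,j}$, so that the symmetry of $S$ is matched to the symmetry-in-$(i,j)$ of the self-adjointness condition with the correct transpose. The bilinearity reduction in the reverse direction is the step that does the real work, and it is routine provided one keeps the orthonormality of $\{e_i\}_{i=1}^m$ and the expansion \eqref{eq:xei} in view.
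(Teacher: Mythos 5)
Your proof is correct and follows essentially the same route as the paper: reduce to the $\Z_2$-linear case via Lemma \ref{lem:linear2}, then use \eqref{eq:esamplepre} to identify condition \eqref{eq:condsig} with the symmetry of $S$. The only difference is that you spell out the bilinearity reduction for (ii) $\Rightarrow$ (i), which the paper's terser ``if and only if'' statement leaves implicit.
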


\begin{proof}
By Lemma \ref{lem:linear} we may assume that $\sigma$ is $\Z_2$-linear.
Let the matrix $S \in \Mat_m(\Z_2)$ represent  $\sigma$ with respect to $\{e_i\}_{i=1}^m$.
By \eqref{eq:esamplepre}, the condition \eqref{eq:condsig} holds if and only if
$S_{i,j} = S_{j,i}$ for $1 \leq i,j \leq m$.
\end{proof}

\begin{theorem}    \label{thm:main3}    \samepage
\ifDRAFT {\rm thm:main3}. \fi
For a bijection $\sigma : G \to G$  the following are equivalent:
\begin{itemize}
\item[\rm (i)]
$\sigma(0) = 0$ and  ${\mathcal X}^{(m)}$ is formally self-dual with respect to the ordering 
$\E^{(m)}_{\sigma(x)}$ $(x \in G)$;
\item[\rm (ii)]
$\sigma$ satisfies \eqref{eq:condsig}.
\end{itemize}
\end{theorem}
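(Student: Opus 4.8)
The plan is to translate the matrix equation $\P = \Q$ defining formal self-duality directly into the bilinear condition \eqref{eq:condsig}, using the explicit entries in \eqref{eq:Qxy} together with the reordering formula of Lemma \ref{lem:Ps}. Since the relevant bijection here is indexed by $G$ rather than by $\{0,1,\dots,d\}$, I would first observe that Lemma \ref{lem:Ps} applies verbatim with $\sigma$ viewed as a permutation of $G$ that fixes $0$: writing $\P$ (resp.\ $\Q$) for the first (resp.\ second) eigenmatrix with respect to the ordering $\E^{(m)}_{\sigma(x)}$ $(x \in G)$, we have $\P_{x,y} = P^{(m)}_{\sigma(x),y}$ and $\Q_{x,y} = Q^{(m)}_{x,\sigma(y)}$. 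Combining this with \eqref{eq:Qxy} yields the closed forms $\P_{x,y} = \be{\sigma(x),y}$ and $\Q_{x,y} = \be{x,\sigma(y)}$ for all $x,y \in G$.

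For the direction (i) $\Rightarrow$ (ii), I would assume $\sigma(0)=0$ (which licenses Lemma \ref{lem:Ps}) and $\P = \Q$, then compare the two closed forms entrywise. Because two $\pm 1$ quantities $\be{a}$ and $\be{b}$ agree exactly when $a = b$ in $\Z_2$, the equality $\be{\sigma(x),y} = \be{x,\sigma(y)}$ forces $\bilin{\sigma(x),y} = \bilin{x,\sigma(y)}$ for all $x,y \in G$, which is precisely \eqref{eq:condsig}. For the reverse direction (ii) $\Rightarrow$ (i), I would first use Lemma \ref{lem:linear2} to see that \eqref{eq:condsig} makes $\sigma$ be $\Z_2$-linear, and then the final assertion of Lemma \ref{lem:linear} to conclude $\sigma(0)=0$; this both supplies that half of (i) and licenses the use of Lemma \ref{lem:Ps}. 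With the closed forms in hand, \eqref{eq:condsig} gives $\be{\sigma(x),y} = \be{x,\sigma(y)}$, i.e.\ $\P_{x,y} = \Q_{x,y}$ for all $x,y$, hence $\P = \Q$ and ${\mathcal X}^{(m)}$ is formally self-dual.

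The computation itself is routine; the single point requiring care is the bookkeeping in applying Lemma \ref{lem:Ps}, namely that $\sigma$ acts on the \emph{row} index in $\P_{x,y} = P^{(m)}_{\sigma(x),y}$ but on the \emph{column} index in $\Q_{x,y} = Q^{(m)}_{x,\sigma(y)}$. This asymmetry is exactly what produces the two sides $\bilin{\sigma(x),y}$ and $\bilin{x,\sigma(y)}$ of \eqref{eq:condsig}. I expect the only mild obstacle to be the logical ordering in the $\Rightarrow$ (i) direction: since the bijection $\sigma$ is not assumed in advance to fix $0$, one must extract $\sigma(0)=0$ from Lemmas \ref{lem:linear2} and \ref{lem:linear} \emph{before} invoking Lemma \ref{lem:Ps}, so that the closed-form entries of $\P$ and $\Q$ are available.
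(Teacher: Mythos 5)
Your proposal is correct and follows essentially the same route as the paper: both apply Lemma \ref{lem:Ps} together with \eqref{eq:Qxy} to get the closed forms $(\P^{(m)})_{x,y} = \be{\sigma(x),y}$ and $(\Q^{(m)})_{x,y} = \be{x,\sigma(y)}$, compare entries via the fact that two $\pm 1$ values agree exactly when their $\Z_2$-exponents agree, and use Lemmas \ref{lem:linear2} and \ref{lem:linear} to obtain $\sigma(0)=0$ in the direction starting from \eqref{eq:condsig}. Your write-up merely makes explicit the case analysis that the paper compresses into ``we may assume $\sigma(0)=0$'' and ``the result is a routine consequence.''
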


\begin{proof}
By Lemma \ref{lem:linear2}
we may assume that $\sigma(0) =0$.
Let $\P^{(m)}$ (resp.\ $\Q^{(m)}$) denote the first eigenmatrix
(resp.\ second eigenmatrix) of ${\mathcal X}^{(m)}$ with respect to the ordering
$\E^{(m)}_{\sigma(x)}$ $(x \in G)$. 
Pick $x,y \in G$.
By Lemma \ref{lem:Ps} and \eqref{eq:Qxy},
\begin{align*}
 (\P^{(m)})_{x,y} &= \be{\sigma(x), y}, 
&
(\Q^{(m)})_{x,y} &= \be{x, \sigma(y)} .
\end{align*}
Thus $(\P^{(m)})_{x,y}  = (\Q^{(m)})_{x,y}$ if and only if 
$\be{\sigma(x),y } = \be{x, \sigma(y)}$
if and only if
$\bilin{\sigma(x),y } = \bilin{x, \sigma(y)}$.
The result is a routine consequence of this.
\end{proof}

\begin{corollary}     \label{cor:main}    \samepage
\ifDRAFT {\rm cor:main}. \fi
Let $\sigma : G\to G$ denote a $\Z_2$-linear bijection.
Then  $\E^{(m)}_{\sigma(x)}$  $(x \in G)$ is an ordering of the primitive idempotents
of ${\mathcal X}^{(m)}$ with respect to which  ${\mathcal X}^{(m)}$ is numerically self-dual.
Let the matrix $S \in \Mat_m(\Z_2)$ represent  $\sigma$ with respect to $\{e_i\}_{i=1}^m$
Then ${\mathcal X}^{(m)}$ is formally self-dual 
with respect to the ordering $\E^{(m)}_{\sigma(x)}$  $(x \in G)$
if and only if $S$ is symmetric.
\end{corollary}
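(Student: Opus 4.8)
The plan is to obtain the corollary as a direct consequence of Theorem~\ref{thm:main1}, Theorem~\ref{thm:main3}, and Lemma~\ref{lem:example}, since all of the substantive content has already been established; what remains is bookkeeping. The first thing I would note is that a $\Z_2$-linear bijection $\sigma$ automatically fixes $0$. This follows from Lemma~\ref{lem:linear} (or directly, since $\sigma(0) = \sigma(0+0) = \sigma(0) + \sigma(0)$ forces $\sigma(0) = 0$). This observation is what lets me discharge the side condition ``$\sigma(0)=0$'' appearing in the hypotheses of both Theorem~\ref{thm:main1} and Theorem~\ref{thm:main3}. The bijectivity of $\sigma$ is what guarantees that $\E^{(m)}_{\sigma(x)}$ $(x \in G)$ is genuinely an ordering of the primitive idempotents.

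For the first assertion, concerning numerical self-duality, I would apply the implication (ii)$\Rightarrow$(i) of Theorem~\ref{thm:main1}: since $\sigma$ is $\Z_2$-linear, ${\mathcal X}^{(m)}$ is numerically self-dual with respect to the ordering $\E^{(m)}_{\sigma(x)}$ $(x \in G)$. No further argument is needed here.

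For the second assertion, concerning the criterion for formal self-duality, I would chain two equivalences. By Theorem~\ref{thm:main3}, together with the fact that $\sigma(0)=0$, formal self-duality of ${\mathcal X}^{(m)}$ with respect to the ordering $\E^{(m)}_{\sigma(x)}$ $(x \in G)$ is equivalent to $\sigma$ satisfying condition~\eqref{eq:condsig}. By Lemma~\ref{lem:example}, and since $\sigma$ is already known to be $\Z_2$-linear, condition~\eqref{eq:condsig} is in turn equivalent to the representing matrix $S$ being symmetric. Composing these two equivalences gives precisely the claim that ${\mathcal X}^{(m)}$ is formally self-dual with respect to the given ordering if and only if $S$ is symmetric.

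I do not anticipate any genuine obstacle in this proof, as the mathematical work lies entirely in the earlier theorems and lemmas. The only point requiring a moment of care is verifying that the ``$\sigma(0)=0$'' hypotheses of the cited results are indeed met, and this is immediate from the $\Z_2$-linearity of $\sigma$ via Lemma~\ref{lem:linear}.
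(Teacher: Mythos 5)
Your proposal is correct and follows essentially the same route as the paper: the paper's proof also cites Theorem \ref{thm:main1} for numerical self-duality and combines Lemma \ref{lem:example} with Theorem \ref{thm:main3} for the formal self-duality criterion. Your extra care in discharging the hypothesis $\sigma(0)=0$ via Lemma \ref{lem:linear} is a detail the paper leaves implicit, but it is the same argument.
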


\begin{proof}
With respect to the ordering $E^{(m)}_{\sigma(x)}$ $(x \in G)$, ${\mathcal X}^{(m)}$ is
numerically self-dual by Theorem \ref{thm:main1}.
By Lemma  \ref{lem:example} and Theorem \ref{thm:main3}, 
${\mathcal X}^{(m)}$ is formally self-dual if and only if $S$ is symmetric.
\end{proof}

In the following example, we examine the case  $m=2$ in more detail.
We display all the orderings of the primitive idempotents with respect to which
${\mathcal X}^{(2)}$ is numerically self-dual;
there are six such orderings.
We show that with respect to four of these orderings, ${\mathcal X}^{(2)}$ is formally-self dual,
and with respect to the remaining two orderings, ${\mathcal X}^{(2)}$ is not formally self-dual.

\begin{example}    \label{example} 
\ifDRAFT {\rm example}. \fi
Referring to \eqref{eq:defG},
we consider the case of $m=2$, so
\[
   G = \Z_2 \oplus \Z_2.
\]
We put the elements of $G$ in the lexicographic order: $00<01<10< 11$.
By \eqref{eq:defAx},
the associate matrices are
\begin{align*}
\A^{(2)}_{00} &=
\A^{(1)}_0 \otimes \A^{(1)}_0
= \begin{pmatrix}
    1 & 0 \\
    0 & 1
  \end{pmatrix}
\otimes
 \begin{pmatrix}
    1 & 0  \\
    0 & 1
  \end{pmatrix}
= \begin{pmatrix}
       1 & 0 & 0 & 0 \\
      0 & 1 & 0 & 0 \\
       0 & 0 & 1 & 0 \\
     0 & 0 & 0 & 1
   \end{pmatrix},
\\
\A^{(2)}_{01} &= 
\A^{(1)}_0 \otimes \A^{(1)}_1
= \begin{pmatrix}
    1 & 0 \\
    0 & 1
  \end{pmatrix}
\otimes
 \begin{pmatrix}
    0 & 1  \\
    1 &0
  \end{pmatrix}
=
\begin{pmatrix}
            0 & 1 & 0 & 0 \\
            1 & 0 & 0 & 0 \\
            0 & 0 & 0 & 1 \\
            0 & 0 & 1 & 0
     \end{pmatrix},\\
\A^{(2)}_{10}&= 
\A^{(1)}_1 \otimes \A^{(1)}_0
= \begin{pmatrix}
    0 & 1 \\
    1 & 0
  \end{pmatrix}
\otimes
 \begin{pmatrix}
    1 & 0  \\
    0 & 1
  \end{pmatrix}
=
\begin{pmatrix}
         0 & 0 & 1 & 0  \\ 
        0 & 0 & 0 & 1 \\
         1 & 0 & 0 & 0  \\
        0 & 1 & 0 & 0
       \end{pmatrix},
\\
\A^{(2)}_{ 11}&=
\A^{(1)}_1 \otimes \A^{(1)}_1
= \begin{pmatrix}
    0 & 1 \\
    1 & 0
  \end{pmatrix}
\otimes
 \begin{pmatrix}
    0 & 1  \\
    1 & 0
  \end{pmatrix}
=
 \begin{pmatrix}
      0 & 0 & 0 & 1 \\
      0 & 0 &  1 & 0 \\
     0 & 1 & 0 & 0 \\
    1 & 0 & 0 & 0
\end{pmatrix}.
\end{align*}
By \eqref{eq:defEx}, the primitive idempotents are
\begin{align*}
\E^{(2)}_{00} &= 
\E^{(1)}_0 \otimes \E^{(1)}_0
= \frac{1}{2}
   \begin{pmatrix}
    1 & 1 \\
    1 & 1
\end{pmatrix}
 \otimes
\frac{1}{2}
 \begin{pmatrix}
   1 & 1 \\
    1 & 1
  \end{pmatrix}
=
\frac{1}{4}
 \begin{pmatrix}
   1 & 1 & 1 & 1 \\
   1 & 1 & 1 & 1 \\
   1 & 1 & 1 & 1  \\
  1 & 1 & 1 & 1 
\end{pmatrix},
\\
 \E^{(2)}_{01} &= 
\E^{(1)}_0 \otimes \E^{(1)}_1
= \frac{1}{2}
   \begin{pmatrix}
    1 & 1 \\
    1 & 1
\end{pmatrix}
 \otimes
\frac{1}{2}
 \begin{pmatrix}
   1 & -1 \\
    -1 & 1
  \end{pmatrix}
=
 \frac{1}{4}
      \begin{pmatrix}
     1 & -1 & 1 & -1  \\
     -1 & 1 & -1 & 1  \\
    1 & -1 & 1 & -1  \\
   -1 & 1 & -1 & 1
  \end{pmatrix},
\\
E^{(2)}_{10} &=
\E^{(1)}_1 \otimes \E^{(1)}_0
= \frac{1}{2}
   \begin{pmatrix}
    1 & -1 \\
    -1 & 1
\end{pmatrix}
 \otimes
\frac{1}{2}
 \begin{pmatrix}
   1 & 1 \\
    1 & 1
  \end{pmatrix}
=
\frac{1}{4}
 \begin{pmatrix}
    1 & 1 & -1 & -1 \\
    1 & 1 & -1 & -1  \\
   -1 & -1 & 1 & 1 \\
   -1 & -1 & 1 & 1
\end{pmatrix},
\\
\E^{(2)}_{11} &=
\E^{(1)}_1 \otimes \E^{(1)}_1
= \frac{1}{2}
   \begin{pmatrix}
    1 & -1 \\
    -1 & 1
\end{pmatrix}
 \otimes
\frac{1}{2}
 \begin{pmatrix}
   1 & -1 \\
   - 1 & 1
  \end{pmatrix}
=\frac{1}{4}
 \begin{pmatrix}
   1 & -1 & -1 & 1  \\
   -1 & 1 & 1 & -1  \\
   -1 & 1 & 1 & -1 \\
   1 & -1 & -1 & 1
\end{pmatrix}.
\end{align*}
By \eqref{eq:Qxy}, 
the first eigenmatrix $P^{(2)}$ and the second eigenmatrix $Q^{(2)}$ are equal to
\[
   \begin{pmatrix}
    1 & 1 & 1 & 1 \\
    1 & -1 & 1 & -1 \\
   1 & 1 & -1 & -1 \\
    1 & -1 & -1 & 1
   \end{pmatrix}.
\]
We will display all the orderings of the primitive idempotents with respect to which
${\mathcal X}^{(2)}$ is numerically self-dual.
There are six such orderings.
With respect to four of these orderings,  ${\mathcal X}^{(2)}$ is formally self-dual,
and with respect to the remaining two orderings,  ${\mathcal X}^{(2)}$ is not formally self-dual.
As in Section \ref{sec:ex},
let us view   $G$ as a vector space over the field $\Z_2$.
For a $\Z_2$-linear bijection $\sigma : G \to G$, consider the ordering $\E^{(2)}_{\sigma(x)}$ $(x \in G)$.
With respect to this ordering, ${\mathcal X}^{(2)}$ is numerically self-dual.
We now consider if ${\mathcal X}^{(2)}$ is formally self-dual with respect to this ordering.
Let $S_\sigma \in \Mat_m(\Z_2)$ represent $\sigma$ with respect to the basis $e_1,e_2$.
There are six solutions for $S_\sigma$; four symmetric and two non-symmetric.
Below we list the solutions.
For each solution we give $\sigma(x)$ $(x \in G)$ along with the first and second eigenmatrix
with respect to $\E^{(2)}_{\sigma(x)}$ $(x \in G)$.
We now give the symmetric solutions for  $S_\sigma$.
{\small
\[
\begin{array}{c|cccccc}
S_\sigma&  \sigma(00), \sigma(01), \sigma(10), \sigma(11) & \text{$1$st emat wrt \ $\{\E^{(2)}_{\sigma(x)}\}_{x \in G}$}
  & \text{$2$nd emat wrt  $\{\E^{(2)}_{\sigma(x)}\}_{x \in G}$}
\\ \hline
   \begin{pmatrix}
     1 & 0 \\ 
     0 & 1
  \end{pmatrix}
& 00, 01, 10, 11 &
\begin{pmatrix}
    1 & 1 & 1 & 1 \\
    1& -1 & 1 & -1 \\
    1 & 1 & -1 & -1 \\
    1 & -1 & -1 & 1
\end{pmatrix}
&
\begin{pmatrix}
    1 & 1 & 1 & 1 \\
    1& -1 & 1 & -1 \\
    1 & 1 & -1 & -1 \\
    1 & -1 & -1 & 1
\end{pmatrix}                  \rule{0mm}{8.5ex}
\\
   \begin{pmatrix}
     0 & 1 \\ 
     1 & 0
  \end{pmatrix}
& 00, 10, 01, 11 &
\begin{pmatrix}
    1 & 1 & 1 & 1 \\
    1& 1 & -1 & -1 \\
    1 & -1 & 1 & -1 \\
    1 & -1 & -1 & 1
\end{pmatrix}
&
\begin{pmatrix}
    1 & 1 & 1 & 1 \\
    1& 1 & -1 & -1 \\
    1 & -1 & 1 & -1 \\
    1 & -1 & -1 & 1
\end{pmatrix}          \rule{0mm}{8.5ex}
\\
   \begin{pmatrix}
     0 & 1 \\ 
     1 & 1
  \end{pmatrix}
& 00, 11, 01, 10 &
\begin{pmatrix}
    1 & 1 & 1 & 1 \\
    1& -1 & -1 & 1 \\
    1 & -1 & 1 & -1 \\
    1 & 1 & -1 & -1
\end{pmatrix}
&
\begin{pmatrix}
    1 & 1 & 1 & 1 \\
    1& -1 & 1 & -1 \\
    1 & -1 & 1 & -1 \\
    1 & 1 & -1 & -1
\end{pmatrix}          \rule{0mm}{8.5ex}
\\
   \begin{pmatrix}
     1 & 1 \\ 
     1 & 0
  \end{pmatrix}
& 00, 10, 11, 01 &
\begin{pmatrix}
    1 & 1 & 1 & 1 \\
    1& 1 & -1 & -1 \\
    1 & -1 & -1 & 1 \\
    1 & -1 & 1 & -1
\end{pmatrix}
&
\begin{pmatrix}
    1 & 1 & 1 & 1 \\
    1& 1 & -1 & -1 \\
    1 & -1 & -1 & 1 \\
    1 & -1 & 1 & -1
\end{pmatrix}          \rule{0mm}{8.5ex}
\end{array}
\]
}
We now give the non-symmetric solutions for  $S_\sigma$.
{\small
\[
\begin{array}{c|cccccc}
S_\sigma& \sigma(00), \sigma(01), \sigma(10), \sigma(11) & \text{$1$st emat wrt $\{\E^{(2)}_{\sigma(x)}\}_{x \in G}$}
  & \text{$2$nd emat wrt $\{\E^{(2)}_{\sigma(x)}\}_{x \in G}$}
\\ \hline
   \begin{pmatrix}
     1 & 0 \\ 
     1 & 1
  \end{pmatrix}
& 00, 01, 11, 10 &
\begin{pmatrix}
    1 & 1 & 1 & 1 \\
    1& -1 & 1 & -1 \\
    1 & -1 & -1 & 1 \\
    1 & 1 & -1 & -1
\end{pmatrix}
&
\begin{pmatrix}
    1 & 1 & 1 & 1 \\
    1& -1 & -1 & 1 \\
    1 & 1 & -1 & -1 \\
    1 & -1 & 1 & -1
\end{pmatrix}          \rule{0mm}{8.5ex}
\\
   \begin{pmatrix}
     1 & 1 \\ 
     0 & 1
  \end{pmatrix}
& 00, 11, 10, 01 &
\begin{pmatrix}
    1 & 1 & 1 & 1 \\
    1& -1 & -1 & 1 \\
    1 & 1 & -1 & -1 \\
    1 & -1 & 1 & -1
\end{pmatrix}
&
\begin{pmatrix}
    1 & 1 & 1 & 1 \\
    1& -1 & 1 & -1 \\
    1 & -1 & -1 & 1 \\
    1 & 1 & -1 & -1
\end{pmatrix}          \rule{0mm}{8.5ex}
\end{array}
\]
}
The above tables show that $S_\sigma$ is symmetric if and only if ${\mathcal X}^{(2)}$ is formally
self-dual with respect to the ordering $\E^{(2)}_{\sigma(x)}$ $(x \in G)$.
\end{example}

\section{$P$-polynomial and $Q$-polynomial association schemes}
\label{sec:Ppoly}

There are certain properties of an association scheme, called  $P$-polynomial and $Q$-polynomial.
In this section we consider the formal self-duality and the numerical self-duality for
$P$-polynomial schemes and   $Q$-polynomial schemes.
Let ${\mathcal X} = (X, \{R_i\}_{i=0}^d)$ denote a symmetric association scheme.

\begin{defi}    \label{def:Ppoly}    \samepage
\ifDRAFT {\rm def:Ppoly}. \fi
The scheme $\mathcal X$ is said to be {\em $P$-polynomial}
whenever the intersection numbers $p^h_{i,j}$ satisfy the 
the following conditions for $0 \leq h,i,j \leq d$:
\begin{itemize}
\item[\rm (i)]
$p^h_{i,j} = 0$ if  one of $h$, $i$, $j$ is greater than  the sum of the
other two;
\item[\rm (ii)]
$p^h_{i,j} \neq 0$ if  one of $h$, $i$, $j$ is equal to  the sum of the
other two.
\end{itemize}
\end{defi}

\begin{theorem}    \label{thm:main2}    \samepage
\ifDRAFT {\rm thm:main2}. \fi
Assume that $\mathcal X$ is $P$-polynomial.
Fix an ordering  $\{E_i\}_{i=0}^d$ of the primitive idempotents of $\mathcal X$.
Then  the following are equivalent:
\begin{itemize}
\item[\rm (i)]
$\mathcal X$ is formally self-dual with respect to the ordering $\{E_i\}_{i=0}^d$;
\item[\rm (ii)]
$\mathcal X$ is numerically self-dual with respect to the ordering $\{E_i\}_{i=0}^d$.
\end{itemize}
\end{theorem}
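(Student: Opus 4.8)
The implication (i) $\Rightarrow$ (ii) holds for every scheme and ordering (it is \eqref{eq:phij=qhij}), so the plan is to prove (ii) $\Rightarrow$ (i). Assume numerical self-duality. First I would record its immediate consequences: putting $h=0$, $i=j$ in $p^h_{i,j}=q^h_{i,j}$ gives $k_i=m_i$ for all $i$, and the equality of \emph{all} intersection numbers with the Krein parameters shows that the Krein parameters inherit the vanishing/non-vanishing pattern of Definition \ref{def:Ppoly}. Hence the fixed idempotent ordering is $Q$-polynomial, so $\mathcal X$ is simultaneously $P$-polynomial (in the relation ordering) and $Q$-polynomial (in the idempotent ordering), and the two three-term recurrences share the coefficients $a_j=p^j_{1,j}=q^j_{1,j}$, $b_j=p^{j+1}_{1,j}=q^{j+1}_{1,j}$, $c_j=p^{j-1}_{1,j}=q^{j-1}_{1,j}$.

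Next I would set up the polynomial description. The generator $A_1$ has $d+1$ distinct eigenvalues $\theta_i:=P_{i,1}$, and I write $\eta_i:=Q_{i,1}$ for the dual eigenvalues. From the $P$-polynomial recurrence $A_1A_j=c_jA_{j-1}+a_jA_j+b_jA_{j+1}$ one gets $A_j=v_j(A_1)$ for polynomials $v_j$ of degree $j$ determined by $a,b,c$, whence $P_{i,j}=v_j(\theta_i)$ by \eqref{eq:Aj}; the dual recurrence \eqref{eq:EicircEj0} together with numerical self-duality gives, after the factors of $|X|^{-1}$ cancel, $Q_{i,j}=v_j(\eta_i)$ with the \emph{same} polynomials $v_j$ (same coefficients, same initial data). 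The $\{\theta_i\}$ are exactly the roots of the characteristic polynomial of the Jacobi matrix built from $a,b,c$ and truncated at $j=d$, and the identical boundary relation holds for the $\{\eta_i\}$; hence $\{\theta_i\}=\{\eta_i\}$ as sets, and I fix the permutation $\pi$ with $\eta_j=\theta_{\pi(j)}$.

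The crux is to upgrade this set equality to $\theta_i=\eta_i$ for each index $i$: the idempotent ordering (which orders the $\theta_i$) and the relation ordering (which orders the $\eta_i$) are a priori unrelated, so equality of the unordered spectra is not enough. Here I would invoke the bispectral structure. Expanding $A_1E_j=\theta_jE_j$ in the basis $\{A_i\}$ via \eqref{eq:Ej} yields a second three-term recurrence for $Q_{i,j}$ in the row index $i$, and expanding $E_1\circ A_j=|X|^{-1}\eta_jA_j$ in the basis $\{E_i\}$ via \eqref{eq:Aj} yields the analogous recurrence for $P_{i,j}$ in $i$; by numerical self-duality these recurrences have identical coefficients, so a single second polynomial family $\hat v_i$ satisfies
\[ \frac{Q_{i,j}}{k_j}=\hat v_i(\theta_j), \qquad \frac{P_{i,j}}{k_j}=\hat v_i(\eta_j) \qquad (0\le i,j\le d). \]
Substituting $\eta_j=\theta_{\pi(j)}$ into the second identity and comparing with the first gives $P_{i,j}/k_j=Q_{i,\pi(j)}/k_{\pi(j)}$ for all $i,j$.

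Finally I would pin down $\pi$. The duality relation $m_iP_{i,j}=k_jQ_{j,i}$ (standard, derivable from \eqref{eq:Pi1}--\eqref{eq:qhij0}) with $i=j=1$ and $m_1=k_1$ forces $P_{1,1}=Q_{1,1}$, i.e.\ $\eta_1=\theta_1$, so $\pi(1)=1$ by distinctness of the eigenvalues. Setting $j=1$ in the displayed relation then gives $P_{i,1}=Q_{i,1}$, that is $\theta_i=\eta_i$, for all $i$; hence $\pi=\mathrm{id}$ and $P_{i,j}=v_j(\theta_i)=v_j(\eta_i)=Q_{i,j}$, proving $P=Q$. I expect the bispectral step—the existence and coincidence of the two row-recurrences, which is precisely the Askey--Wilson duality of a $P$- and $Q$-polynomial scheme—to be the main obstacle; everything else is bookkeeping with the recurrences and the duality relation. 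The stated $Q$-polynomial version of the theorem should follow by the dual argument, interchanging the roles of $(A_i,p^h_{i,j},k_i)$ and $(E_i,q^h_{i,j},m_i)$.
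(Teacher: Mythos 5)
Your proof is correct, and it turns on the same pivot as the paper's: Askey--Wilson duality. The difference lies in how that pivot is obtained and deployed. The paper simply quotes Proposition \ref{prop:AW} (citing \cite[p.~262]{BI}) and specializes \eqref{eq:AW} at $i=1$, $j=1$, and $i=j=1$; combined with $u_i=u^*_i$ and $k_i=m_i$ (the same consequences of numerical self-duality you record), this yields $\theta_j=\theta^*_j$ for all $j$ at once, and $P=Q$ then follows from Lemmas \ref{lem:Pij} and \ref{lem:Qij}. You instead re-derive the duality: your two row recurrences, obtained by expanding $A_1E_j=\theta_jE_j$ in the basis $\{A_i\}_{i=0}^d$ and $E_1\circ A_j=|X|^{-1}\eta_jA_j$ in the basis $\{E_i\}_{i=0}^d$, are precisely the ingredients of the proof of Proposition \ref{prop:AW} in \cite{BI}, so your ``bispectral step'' is sound and makes the argument self-contained rather than a citation. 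Your endgame also differs: you first get only set equality of the spectra $\{\theta_i\}=\{\eta_i\}$ and pin down the matching via the identity $m_iP_{i,j}=k_jQ_{j,i}$ (standard, \cite[Lemma~2.2.1]{BCN}; strictly its derivation uses $PQ=|X|I$ in addition to \eqref{eq:Pi1}--\eqref{eq:qhij0}), whereas the paper gets $\theta_1=\theta^*_1$ directly from \eqref{eq:AW} at $i=j=1$ and propagates it with the $i=1$ and $j=1$ specializations, needing no permutation at all. Two small points: your normalization $Q_{i,j}/k_j=\hat v_i(\theta_j)$ should read $Q_{i,j}/m_j$ (harmless, since $k_j=m_j$ is already established at that stage), and your $\pi$ need only be a well-defined map (by distinctness of the $\theta_i$), not a bijection, for the argument to close. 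Finally, note that the paper proves the $Q$-polynomial companion (Theorem \ref{thm:main4}) not by dualizing the whole argument as you propose, but by a one-line reduction: numerical self-duality transfers $Q$-polynomiality to $P$-polynomiality, and then Theorem \ref{thm:main2} applies.
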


The proof of Theorem \ref{thm:main2} will be completed in Section \ref{sec:proof}.

\begin{defi}    \label{def:Qpoly}    \samepage
\ifDRAFT {\rm def:Qpoly}. \fi
An ordering $\{E_i\}_{i=0}^d$ of the primitive idempotents is said to be  {\em $Q$-polynomial}
whenever the corresponding Krein parameters $q^h_{i,j}$
satisfy the following conditions for $0 \leq h,i,j \leq d$:
\begin{itemize}
\item[\rm (i)]
$q^h_{i,j} = 0$ if  one of $h$, $i$, $j$ is greater than  the sum of the
other two;
\item[\rm (ii)]
$q^h_{i,j} \neq 0$ if  one of $h$, $i$, $j$ is equal to  the sum of the
other two.
\end{itemize}
\end{defi}

\begin{theorem}    \label{thm:main4}    \samepage
\ifDRAFT {\rm thm:main4}. \fi
Let $\{E_i\}_{i=0}^d$ denote a  $Q$-polynomial ordering of the primitive idempotents of 
$\mathcal X$.
Then the following are equivalent:
\begin{itemize}
\item[\rm (i)]
$\mathcal X$ is formally self-dual  with respect to the  ordering $\{E_i\}_{i=0}^d$;
\item[\rm (ii)]
$\mathcal X$ is numerically self-dual with respect to the  ordering $\{E_i\}_{i=0}^d$.
\end{itemize}
\end{theorem}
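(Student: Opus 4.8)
The plan is to treat the two implications separately, reducing the nontrivial one to Theorem~\ref{thm:main2}. The implication (i) $\Rightarrow$ (ii) needs no hypothesis on the ordering and is already in hand: as recorded in Section~\ref{sec:dual}, the assumption $P=Q$ yields, through \eqref{eq:Pi1}--\eqref{eq:qhij0}, the equalities $p^h_{i,j}=q^h_{i,j}$ of \eqref{eq:phij=qhij}, and by Definition~\ref{def:num} this is exactly numerical self-duality. So this direction is immediate.

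For (ii) $\Rightarrow$ (i) the key point is that the two hypotheses together force $\mathcal X$ to be $P$-polynomial, after which Theorem~\ref{thm:main2} finishes the job. Assume $\mathcal X$ is numerically self-dual with respect to $\{E_i\}_{i=0}^d$, so that $p^h_{i,j}=q^h_{i,j}$ for $0\le h,i,j\le d$. Since the ordering $\{E_i\}_{i=0}^d$ is $Q$-polynomial, the Krein parameters satisfy conditions (i),(ii) of Definition~\ref{def:Qpoly}. These are identical, symbol for symbol, to conditions (i),(ii) of Definition~\ref{def:Ppoly} under the replacement $q^h_{i,j}\mapsto p^h_{i,j}$; transporting them across the equalities $p^h_{i,j}=q^h_{i,j}$ shows that the intersection numbers satisfy the defining conditions of a $P$-polynomial scheme. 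Hence $\mathcal X$ is $P$-polynomial. Applying Theorem~\ref{thm:main2} to $\mathcal X$ and the given ordering $\{E_i\}_{i=0}^d$, the equivalence of formal and numerical self-duality holds; since numerical self-duality is assumed, formal self-duality follows, which is (i).

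I expect the real difficulty of this theorem to be inherited wholesale from Theorem~\ref{thm:main2} rather than to surface in the reduction above, which becomes forced once one observes that Definitions~\ref{def:Ppoly} and \ref{def:Qpoly} impose literally the same combinatorial pattern on the $p^h_{i,j}$ and the $q^h_{i,j}$. The substantive obstacle instead lives in the nontrivial direction of Theorem~\ref{thm:main2}, where one must pass from equality of the intersection numbers and Krein parameters to the matrix identity $P=Q$; this is precisely where the Askey--Wilson duality developed in Section~\ref{sec:duality} enters, serving to align the eigenvalue data encoded in $P$ with the dual eigenvalue data encoded in $Q$. For Theorem~\ref{thm:main4} itself no such machinery is invoked directly.
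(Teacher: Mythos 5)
Your proposal is correct and matches the paper's own proof: both directions are handled exactly as the paper does, with (i) $\Rightarrow$ (ii) immediate from \eqref{eq:Pi1}--\eqref{eq:qhij0}, and (ii) $\Rightarrow$ (i) deducing that $\mathcal X$ is $P$-polynomial from the $Q$-polynomial hypothesis together with $p^h_{i,j}=q^h_{i,j}$, then invoking Theorem \ref{thm:main2}. Your observation that Definitions \ref{def:Ppoly} and \ref{def:Qpoly} impose the same combinatorial pattern is precisely the (unstated) justification behind the paper's one-line reduction, so no further comment is needed.
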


The proof of Theorem \ref{thm:main4} will be completed in Section \ref{sec:proof}.
In the proof of the above theorems  we use
the concept of  Askey-Wilson  duality.
In the next section we recall this concept.

\section{Askey-Wilson duality}
\label{sec:duality}

In this section, we recall the concept of Askey-Wilson duality.
Let ${\mathcal X} = (X, \{R_i\}_{i=0}^d)$ denote a symmetric association scheme.
Fix an ordering $\{E_i\}_{i=0}^d$ of the primitive idempotents of $\mathcal X$.
Recall the scalars $k_i = p^0_{i,i}$ and $m_i = q^0_{i,i}$.

Assume that $\mathcal X$ is $P$-polynomial.
We abbreviate
\begin{align*}
c_i &= p^i_{1,i-1} \;\; (1 \leq i \leq d),  &
a_i &= p^i_{1,i } \;\; (0 \leq i \leq d),   &
b_i &= p^i_{1,i+1} \;\; (0 \leq  i \leq d-1).
\end{align*}
For an indeterminate $\lambda$, let $\R[\lambda]$ denote the $\R$-algebra
consisting of the polynomials in $\lambda$ that have all coefficients in $\R$.
Define polynomials $u_i \in \R[\lambda]$ $(0 \leq i \leq d)$ by
$u_0 = 1$, $u_1 =\lambda/k_1$, and
\begin{align*}
  \lambda u_i &= c_i u_{i-1} + a_i u_i + b_i u_{i+1}      
              \qquad\qquad   ( 1 \leq  i \leq d-1).
\end{align*}
Abbreviate
\begin{align*}
  \th_i &= P_{i,1}  \qquad\qquad  (0 \leq i \leq d).
\end{align*}

\begin{lemma}   {\rm (See \cite[p.\ 190]{BI}.)}
\label{lem:Pij}   \samepage
\ifDRAFT {\rm lem:Pij}. \fi
We have
\begin{align*}
   P_{i,j} &= k_j u_j (\th_i)    \qquad\qquad (0 \leq i,j \leq d). 
\end{align*}
\end{lemma}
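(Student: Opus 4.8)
The plan is to exploit the defining feature of a $P$-polynomial scheme: the Bose-Mesner algebra is generated by the single matrix $\A_1$, so each $\A_j$ is a polynomial in $\A_1$. I would identify that polynomial as $k_j u_j$ and then read off the entries $P_{i,j}$ from the spectral decomposition of $\A_1$ relative to the idempotents $\{\E_i\}_{i=0}^d$.

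First I would convert the $P$-polynomial property into a three-term recurrence for the associate matrices. Since $p^h_{1,i}=0$ unless $h\in\{i-1,i,i+1\}$, the relation $\A_1\A_i=\sum_h p^h_{1,i}\A_h$ collapses, and using $c_{i+1}=p^{i+1}_{1,i}$, $a_i=p^i_{1,i}$, $b_{i-1}=p^{i-1}_{1,i}$ it becomes
\[
\A_1\A_i = c_{i+1}\A_{i+1}+a_i\A_i+b_{i-1}\A_{i-1}.
\]
Solving this for $\A_{i+1}$ shows by induction that there are polynomials $v_i\in\R[\lambda]$ with $\A_i=v_i(\A_1)$, determined by $v_0=1$, $v_1=\lambda$, and $v_{i+1}=c_{i+1}^{-1}\bigl[(\lambda-a_i)v_i-b_{i-1}v_{i-1}\bigr]$ for $1\le i\le d-1$; the base cases $\A_0=\I$ and $\A_1$ give $v_0=1$, $v_1=\lambda$.

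The crux of the argument is to show $v_i=k_iu_i$ for $0\le i\le d$, again by induction. Here $v_0=k_0u_0=1$ and $v_1=k_1u_1=\lambda$ settle the base cases. For the inductive step I would substitute $v_i=k_iu_i$, $v_{i-1}=k_{i-1}u_{i-1}$ into the $v$-recurrence and match it against the stated $u$-recurrence $u_{i+1}=b_i^{-1}\bigl[(\lambda-a_i)u_i-c_iu_{i-1}\bigr]$. This matching needs the valency identities $k_{i-1}b_{i-1}=k_ic_i$ and $k_ib_i=k_{i+1}c_{i+1}$ (the latter giving $k_i/c_{i+1}=k_{i+1}/b_i$), which I expect to be the main obstacle and which I would derive from the standard relation $k_hp^h_{i,j}=k_ip^i_{h,j}$ together with the symmetry $p^h_{i,j}=p^h_{j,i}$. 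With these in hand the two recurrences coincide, yielding $\A_j=k_ju_j(\A_1)$.

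Finally I would pass to eigenvalues. From $\A_1=\sum_i P_{i,1}\E_i=\sum_i\th_i\E_i$ and the fact that the $\E_i$ are mutually orthogonal idempotents summing to $\I$, any polynomial $f$ satisfies $f(\A_1)=\sum_i f(\th_i)\E_i$. Applying this to $f=k_ju_j$ gives $\A_j=\sum_i k_ju_j(\th_i)\E_i$. Comparing with $\A_j=\sum_i P_{i,j}\E_i$ and invoking the linear independence of $\{\E_i\}_{i=0}^d$ yields $P_{i,j}=k_ju_j(\th_i)$ for $0\le i,j\le d$, as claimed. Beyond the valency identities, the only remaining care is boundary bookkeeping: the $u$-recurrence runs only over $1\le i\le d-1$, which is exactly enough to produce $u_2,\dots,u_d$, so no undefined $c_{d+1}$ or $b_d$ ever appears.
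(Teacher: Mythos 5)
Your proof is correct and takes essentially the same route as the paper, which gives no proof of its own for Lemma \ref{lem:Pij} but defers to \cite[p.~190]{BI}, where the standard argument is exactly yours: the $P$-polynomial condition forces the three-term recurrence $\A_1\A_i = c_{i+1}\A_{i+1} + a_i\A_i + b_{i-1}\A_{i-1}$, the valency identities $k_{i-1}b_{i-1}=k_ic_i$ and $k_ib_i=k_{i+1}c_{i+1}$ identify the resulting polynomials as $k_ju_j$ so that $\A_j=k_ju_j(\A_1)$, and evaluating via the spectral decomposition $\A_1=\sum_{i=0}^d \th_i\E_i$ together with the linear independence of the $\E_i$ yields $P_{i,j}=k_ju_j(\th_i)$. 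Your attention to the nonvanishing of $c_{i+1}$ and $b_i$ (from condition (ii) of the $P$-polynomial definition) and to the boundary bookkeeping is also sound.
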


Let  $\{E_i\}_{i=0}^d$ denote a $Q$-polynomial ordering of the primitive idempotents of 
$\mathcal X$.
We abbreviate
\begin{align*}
c^*_i &= q^i_{1,i-1} \;\; (1 \leq i \leq d),  &
a^*_i &= q^i_{1,i} \;\; (0 \leq i \leq d),   &
b^*_i &= q^i_{1,i+1} \;\; (0 \leq  i \leq d-1).
\end{align*}
Define polynomials $u^*_i \in \R[\lambda]$ $(0 \leq i \leq d)$ by
$u^*_0 = 1$, $u^*_1 =\lambda/m_1$, and
\begin{align*} 
\lambda u^*_i &= c^*_{i} u^*_{i-1} + a^*_i u^*_i
        + b^*_i  u^*_{i+1}               \qquad\qquad   ( 1 \leq  i \leq d-1).
\end{align*}
Abbreviate
\begin{align*}
  \th^*_i &=  Q_{i,1}  \qquad\qquad  (0 \leq i \leq d).
\end{align*}

\begin{lemma}   {\rm (See \cite[p.\  193]{BI}.)}
\label{lem:Qij}   \samepage
\ifDRAFT {\rm lem:Qij}. \fi
We have
\begin{align*}
   Q_{i,j} &= m_j u^*_j (\th^*_i)     \qquad\qquad  (0 \leq i,j \leq d). 
\end{align*}
\end{lemma}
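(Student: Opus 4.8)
The plan is to derive Lemma~\ref{lem:Qij} as the entry-wise-product dual of Lemma~\ref{lem:Pij}, replacing the associate matrices $\A_i$ by the primitive idempotents $\E_i$, ordinary matrix multiplication by the entry-wise product $\circ$, the valencies $k_i$ by the multiplicities $m_i$, and the intersection numbers by the Krein parameters. The device that makes the $\circ$-product tractable is the following: for each $i$ define $\phi_i : \mathcal{M} \to \C$ by $\phi_i(M) = M_{x,y}$ where $(x,y) \in R_i$. Since every $M \in \mathcal{M}$ is constant on each relation $R_i$, the map $\phi_i$ is well defined, and since $(M \circ N)_{x,y} = M_{x,y} N_{x,y}$ it is an algebra homomorphism from $(\mathcal{M}, \circ)$ to $\C$. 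From \eqref{eq:Ej} one reads off $\phi_i(\E_j) = |X|^{-1} Q_{i,j}$; in particular $\phi_i(\E_1) = |X|^{-1} \th^*_i$.

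First I would record the Krein-parameter symmetry
\[
  m_h q^h_{i,j} = m_i q^i_{h,j} \qquad (0 \leq h,i,j \leq d),
\]
which follows at once from \eqref{eq:qhij0}, since $q^h_{i,j}/(m_i m_j)$ is symmetric in $h,i,j$. Next, applying the $Q$-polynomial property to \eqref{eq:EicircEj0} with $i=1$ gives $q^h_{1,j} = 0$ unless $|h-j| \leq 1$, so
\[
  \E_1 \circ \E_j = |X|^{-1}\bigl(q^{j-1}_{1,j}\,\E_{j-1} + a^*_j\,\E_j + q^{j+1}_{1,j}\,\E_{j+1}\bigr).
\]
Using the symmetry identity I would rewrite the off-diagonal coefficients as $q^{j-1}_{1,j} = (m_j/m_{j-1})\,c^*_j$ and $q^{j+1}_{1,j} = (m_j/m_{j+1})\,b^*_j$.

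Then I would apply the homomorphism $\phi_i$ to this recurrence and substitute $\phi_i(\E_j) = |X|^{-1} Q_{i,j}$ together with $\phi_i(\E_1) = |X|^{-1}\th^*_i$. After clearing the powers of $|X|$, this yields
\[
  \th^*_i\, Q_{i,j} = \tfrac{m_j}{m_{j-1}}\,c^*_j\,Q_{i,j-1} + a^*_j\,Q_{i,j} + \tfrac{m_j}{m_{j+1}}\,b^*_j\,Q_{i,j+1}.
\]
Writing $Q_{i,j} = m_j\,w_{i,j}$ and dividing by $m_j$ collapses this to
\[
  \th^*_i\, w_{i,j} = c^*_j\,w_{i,j-1} + a^*_j\,w_{i,j} + b^*_j\,w_{i,j+1},
\]
which is exactly the defining recurrence for $u^*_j$ evaluated at $\lambda = \th^*_i$. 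Since $Q_{i,0} = 1$ and $Q_{i,1} = \th^*_i$ give $w_{i,0} = 1 = u^*_0(\th^*_i)$ and $w_{i,1} = \th^*_i/m_1 = u^*_1(\th^*_i)$, and since $b^*_j \neq 0$ for $0 \leq j \leq d-1$ by the $Q$-polynomial property (so the recurrence determines $w_{i,j+1}$ from its two predecessors), induction on $j$ gives $w_{i,j} = u^*_j(\th^*_i)$, that is, $Q_{i,j} = m_j u^*_j(\th^*_i)$.

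I expect the main obstacle to be bookkeeping rather than conceptual: correctly passing the normalizing factors $|X|^{-1}$ through the $\circ$-product via $\phi_i$, and converting the raw coefficients $q^{j\pm 1}_{1,j}$ appearing in $\E_1 \circ \E_j$ into the defined quantities $c^*_j$ and $b^*_j$ by means of the Krein symmetry identity. Once these normalizations are handled, the computation is the $\circ$-product dual of the standard derivation underlying Lemma~\ref{lem:Pij}.
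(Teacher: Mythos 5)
Your proof is correct; note that the paper itself gives no proof of this lemma, citing it to \cite[p.\ 193]{BI}. Your argument---applying the $(x,y)$-entry map for $(x,y)\in R_i$ (your $\phi_i$) to the tridiagonal expansion of $\E_1\circ \E_j$, normalizing the coefficients via the Krein symmetry $q^{j\pm1}_{1,j}=(m_j/m_{j\pm1})\,q^j_{1,j\pm1}$, and inducting on the resulting three-term recurrence with $b^*_j\neq 0$---is precisely the standard derivation of the dual eigenvalue recurrence found in the cited source, so nothing further is needed.
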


\begin{prop}  {\rm (See \cite[p.\  262]{BI}.) }
\label{prop:AW} \samepage
\ifDRAFT {\rm prop:AW}. \fi
Assume that $\mathcal  X$ is $P$-polynomial.
Further assume that there exists a 
$Q$-polynomial ordering $\{E_i\}_{i=0}^d$ of the primitive idempotents of $\mathcal X$.
Then
\begin{align}
   u_i (\th_j) = u^*_j (\th^*_i)
     \qquad\qquad       (0 \leq i,j \leq d).   \label{eq:AW}
\end{align}
\end{prop}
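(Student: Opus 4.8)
The plan is to reduce the polynomial identity \eqref{eq:AW} to a single identity between the entries of the two eigenmatrices, and then to prove that entry identity by a trace computation. First I would rewrite each side of \eqref{eq:AW} using Lemmas \ref{lem:Pij} and \ref{lem:Qij}. Reading the formula $P_{i,j}=k_j u_j(\th_i)$ of Lemma \ref{lem:Pij} with the roles of $i$ and $j$ interchanged gives $u_i(\th_j)=P_{j,i}/k_i$, and Lemma \ref{lem:Qij} gives directly $u^*_j(\th^*_i)=Q_{i,j}/m_j$. Since $k_i\neq 0$ and $m_j\neq 0$ for all $i,j$, the identity \eqref{eq:AW} holds for all $0\le i,j\le d$ if and only if
\[
  m_j\,P_{j,i} = k_i\,Q_{i,j} \qquad (0 \le i,j \le d).
\]
Note that the $P$-polynomial hypothesis and the existence of a $Q$-polynomial ordering enter only here: they are exactly what is needed to define the quantities $\th_i$, $u_i$ and $\th^*_i$, $u^*_j$ and thus to invoke the two lemmas, whereas the displayed relation itself holds for an arbitrary symmetric association scheme.

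The content of the proposition therefore lies in this orthogonality relation, which I would establish by evaluating $\operatorname{tr}(\A_j \E_i)$ in two ways. The two auxiliary evaluations required are $\operatorname{tr}(\A_i\A_l)=|X|\,k_i\,\delta_{i,l}$ and $\operatorname{tr}(\E_l\E_i)=\delta_{l,i}\,m_i$. The first follows because $\A_l=\A_l^t$, so the $(x,x)$-entry of $\A_i\A_l$ counts the $z$ with $(x,z)\in R_i$ and $(x,z)\in R_l$; as the relations are disjoint this equals $k_i\delta_{i,l}$, and summing over $x\in X$ gives the claim. For the second, $\E_l\E_i=\delta_{l,i}\E_i$, and $\operatorname{tr}(\E_i)=m_i$ follows from \eqref{eq:Ej} together with $\operatorname{tr}(\A_l)=|X|\,\delta_{l,0}$ (valid since $R_0$ is the diagonal) and the value $Q_{0,i}=m_i$ from \eqref{eq:Pi1}. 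Now expanding $\A_j=\sum_l P_{l,j}\E_l$ via \eqref{eq:Aj} gives $\operatorname{tr}(\A_j\E_i)=m_i P_{i,j}$, while expanding $\E_i=|X|^{-1}\sum_l Q_{l,i}\A_l$ via \eqref{eq:Ej} gives $\operatorname{tr}(\A_j\E_i)=k_j Q_{j,i}$. Equating these yields $m_i P_{i,j}=k_j Q_{j,i}$, and interchanging $i$ and $j$ produces exactly the relation displayed above.

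I expect no serious obstacle: once the reduction via the two lemmas is made, the proposition is equivalent to a standard orthogonality relation, and the only real care needed is the index bookkeeping when applying Lemma \ref{lem:Pij} (the interchange of $i$ and $j$) and the correct evaluation of $\operatorname{tr}(\E_i)$. An alternative to the trace argument would be to derive $m_i P_{i,j}=k_j Q_{j,i}$ algebraically from $PQ=QP=|X|\I$ together with \eqref{eq:phij0} and \eqref{eq:qhij0}, but the two-way trace computation is shorter and more transparent, so that is the route I would take.
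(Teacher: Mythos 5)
Your proof is correct. The paper does not prove Proposition \ref{prop:AW} itself---it simply cites \cite[p.~262]{BI}---and your argument is essentially the standard derivation found there: Lemmas \ref{lem:Pij} and \ref{lem:Qij} reduce \eqref{eq:AW} to the orthogonality relation $m_i P_{i,j} = k_j Q_{j,i}$ (cf.\ \cite[Theorem 3.5]{BI}, \cite[Lemma 2.2.1]{BCN}), and your two-way evaluation of the trace of $\A_j \E_i$, including the auxiliary facts $\operatorname{tr}(\A_j\A_l)=|X|\,k_j\,\delta_{j,l}$ and $\operatorname{tr}(\E_i)=Q_{0,i}=m_i$, establishes that relation correctly.
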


The equation \eqref{eq:AW} is called the Askey-Wilson duality.

\section{Proof of Theorems \ref{thm:main2} and \ref{thm:main4}}
\label{sec:proof}

In this section we prove Theorems \ref{thm:main2} and \ref{thm:main4}.
We will  use the notation from Section \ref{sec:duality}.

\begin{proofof}{Theorem \ref{thm:main2}}
(i) $\Rightarrow$ (ii)
Clear.

(ii) $\Rightarrow$ (i)
The ordering $\{E_i\}_{i=0}^d$ is $Q$-polynomial, since $\mathcal X$ is $P$-polynomial and $p^h_{i,j} = q^h_{i,j}$
$(0 \leq h,i,j \leq d)$.
Setting $i=1$ in \eqref{eq:AW} we obtain
\begin{equation}
   \th_j / k_1 = u^*_j (\th^*_1)  \qquad\qquad (0 \leq j \leq d).   \label{eq:poofaux2}
\end{equation}
Setting $j=1$ in \eqref{eq:AW} we obtain
\begin{equation}
   u_i(\th_1) = \th^*_i/m_1  \qquad\qquad (0 \leq i \leq d).   \label{eq:poofaux3}
\end{equation}
Setting $i=j=1$ in \eqref{eq:AW} we obtain
\begin{equation}
   \th_1 / k_1 = \th^*_1/m_1 .   \label{eq:poofaux4}
\end{equation}
We have $p^h_{i,j} = q^h_{i,j}$ for $0 \leq h,i,j \leq d$.
Consequently $u_i = u^*_i$ for $0 \leq i \leq d$, and also $k_i = m_i$ for $0 \leq i \leq d$.
By \eqref{eq:poofaux4} we find that $\th_1 = \th^*_1$.
Comparing \eqref{eq:poofaux2} and \eqref{eq:poofaux3} using these comments,
we obtain
$\th_i = \th^*_i$ for $0 \leq i \leq d$.
By this and Lemmas \ref{lem:Pij}, \ref{lem:Qij}
we obtain $P = Q$.
We have shown that $\mathcal X$ is formally self-dual wih respect to
the ordering $\{E_i\}_{i=0}^d$.
\end{proofof}

\begin{proofof}{Theorem \ref{thm:main4}}
(i) $\Rightarrow$ (ii)
Clear.

(ii) $\Rightarrow$ (i)
Note that  $\mathcal X$ is $P$-polynomial, since the ordering $\{E_i\}_{i=0}^d$ is $Q$-polynomial and $p^h_{i,j} = q^h_{i,j}$
$(0 \leq h,i,j \leq d)$.
By this and Theorem \ref{thm:main2} we find that $\mathcal X$ is formally self-dual.
\end{proofof}

{
\small

\bigskip\bigskip\noindent
Kazumasa Nomura\\
Institute of Science Tokyo\\
Kohnodai, Ichikawa, 272-0827 Japan\\
email: knomura@pop11.odn.ne.jp

\bigskip\noindent
Paul Terwilliger\\
Department of Mathematics\\
University of Wisconsin\\
480 Lincoln Drive\\ 
Madison, Wisconsin, 53706 USA\\
email: terwilli@math.wisc.edu

\bigskip\noindent
{\bf Keywords.}
association scheme;
self-dual;
$P$-polynomial;
$Q$-polynomial.
\\
\noindent
{\bf 2020 Mathematics Subject Classification.} 
05E30,  15A21, 15B10.

\end{document}